\documentclass[leqno]{amsart}
\usepackage{amssymb}
\usepackage{mathrsfs}
\usepackage{amsmath, amsfonts, vmargin, enumerate}
\usepackage{color}
\usepackage{verbatim}
\usepackage{amsthm}
\usepackage{latexsym, bm}

\usepackage{euscript}
\usepackage{dsfont}

\input xypic

\xyoption {all}

 \makeatletter

%%%%%%%%%%%%%%%%%%%%%%%%%%%%%%%%%
%\addtolength{\textwidth}{1.0in} \addtolength{\hoffset}{-0.6in}
%\addtolength{\voffset}{-0.18in}
%\addtolength{\textheight}{1.0in}
%%%%%%%%%%%%%%%%%%%%%%%%%%%%%%%%%

%%%%%%%%%%%%%%%%%%%%%%%%%%%%%%%%%
%%%%%%%%%%%%%%%%%%%%%%%%%%%%%%%%%

%\setmarginsrb{3cm}{3cm}{3cm}{3cm}{1mm}{6mm}{0mm}{10mm}

%%%%%%%%%%%%%%%%%%%%%%%%%%%%%%%%%
%%%%%%%%%%%%%%%%%%%%%%%%%%%%%%%%%

\newtheorem{thm}{Theorem}%[section]

\newtheorem{lem}[thm]{Lemma}
\newtheorem{defi}[thm]{Definition}
\newtheorem{remark}[thm]{Remark}
\newtheorem{example}[thm]{Example}
\newtheorem{pb}[thm]{Problem}

\newenvironment{rk}{\begin{remark}\rm}{\end{remark}}

%\numberwithin{equation}{section}

\newcommand{\B}{{\mathcal B}}

\newcommand{\M}{{\mathcal M}}

\newcommand{\R}{{\mathcal R}}

\renewcommand{\a}{\alpha}

\renewcommand{\t}{\theta}
\newcommand{\e}{\varepsilon}

\renewcommand{\O}{\Omega}

\newcommand{\ot}{\otimes}

\newcommand{\8}{\infty}
\newcommand{\el}{\ell}

\newcommand{\n}{\noindent}

\newcommand{\be}{\begin{align*}}
\newcommand{\ee}{\end{align*}}
\newcommand{\beq}{\begin{equation}}
\newcommand{\eeq}{\end{equation}}
\newcommand{\beqn}{\begin{equation*}}
\newcommand{\eeqn}{\end{equation*}}

\begin{document}

\title[Real interpolation of operator $L_p$-spaces]{Notes on real interpolation of operator $L_p$-spaces}

\thanks{{\it 2000 Mathematics Subject Classification:} Primary:  Secondary: }
\thanks{{\it Key words:} Operator spaces, $L_p$-spaces, real interpolation, column Hilbertian spaces}

 \author[Marius Junge]{Marius Junge}
 \address{Department of Mathematics,  University of Illinois, Urbana, IL 61801, USA}
 \email{junge@math.uiuc.edu}

 \author[Quanhua  Xu]{Quanhua Xu}
\address{Institute for Advanced Study in Mathematics, Harbin Institute of Technology,  Harbin 150001, China; and
Laboratoire de Math{\'e}matiques, Universit{\'e} de Bourgogne Franche-Comt{\'e}, 25030 Besan\c{c}on Cedex, France}
\email{qxu@univ-fcomte.fr}

\date{}
\maketitle

 \begin{abstract}
 Let $\mathcal{M}$ be a semifinite von Neumann algebra. We equip the associated noncommutative $L_p$-spaces with their natural operator space structure introduced by Pisier via complex interpolation. On the other hand, for $1<p<\infty$ let
  $$L_{p,p}(\mathcal{M})=\big(L_{\infty}(\mathcal{M}),\,L_{1}(\mathcal{M})\big)_{\frac1p,\,p}$$
 be equipped with the operator space structure via real interpolation as defined by the second named author ({\em J. Funct. Anal}. 139 (1996), 500--539). We show that $L_{p,p}(\mathcal{M})=L_{p}(\mathcal{M})$ completely isomorphically if and only if $\mathcal{M}$ is finite dimensional. This solves in the negative the three problems left open in the quoted work of the second author.

We also show that for $1<p<\infty$ and $1\le q\le\infty$ with $p\neq q$
  $$\big(L_{\infty}(\mathcal{M};\ell_q),\,L_{1}(\mathcal{M};\ell_q)\big)_{\frac1p,\,p}=L_p(\mathcal{M}; \ell_q)$$
with equivalent norms, i.e., at the Banach space level if and only if $\mathcal{M}$ is isomorphic, as a Banach space, to a commutative von Neumann algebra.

Our third result concerns the following inequality:
 $$
 \big\|\big(\sum_ix_i^q\big)^{\frac1q}\big\|_{L_p(\mathcal{M})}\le\big\|\big(\sum_ix_i^r\big)^{\frac1r}\big\|_{L_p(\mathcal{M})}
 $$
 for any finite sequence $(x_i)\subset L_p^+(\mathcal{M})$, where $0<r<q<\infty$ and $0<p\le\infty$. If $\mathcal{M}$ is not  isomorphic, as a Banach space, to a commutative von Meumann algebra, then this inequality holds if and only if $p\ge r$.
 \end{abstract}

\bigskip

%%%%%%%%%%%%%%%%%%%%%%%%%%%%%%%%%%%%%%%%%%%%%%%%%%%%%%
%%%%%%%%%%%%%%%%%%%%%%%%%%%%%%%%%%%%%%%%%%%%%%%%%%%%%%

\section{Introduction}

%%%%%%%%%%%%%%%%%%%%%%%%%%%%%%%%%%%%%%%%%%%%%%%%%%%%%%
%%%%%%%%%%%%%%%%%%%%%%%%%%%%%%%%%%%%%%%%%%%%%%%%%%%%%%

Interpolation of $L_p$-spaces is a classical subject. Our reference for interpolation theory is \cite{BL}. Let $(\O, \mu)$ be a measure space. Let $1\le p_0,p_1, p\le\8$ and $0<\t<1$ with $\frac1p=\frac{1-\t}{p_0}+ \frac{\t}{p_1}$. The following well known interpolation equalities hold
 \beq\label{comLp}
 \big(L_{p_0}(\O),\, L_{p_1}(\O)\big)_{\t}=L_{p}(\O)\;\text{ with equal norms},
 \eeq
 \beq\label{realLp}
 \big(L_{p_0}(\O),\, L_{p_1}(\O)\big)_{\t, p}=L_{p}(\O) \;\text{ with equivalent norms}.
 \eeq
Here $(\,\cdot,\,\cdot\,)_{\t}$ and  $(\,\cdot,\,\cdot\,)_{\t, p}$ denote, respectively,  the complex and real interpolation functors.  It is also well known that the above equalities hold for vector-valued $L_p$-spaces. More precisely, under the same assumption on the parameters  (assuming additionally $p<\8$), we have
 \beq\label{comVLp}
 \big(L_{p_0}(\O; E_0),\, L_{p_1}(\O;E_1)\big)_{\t}=L_{p}\big(\O; (E_0,\, E_1)_{\t}\big) \;\text{ with equal norms},
 \eeq
  \beq\label{realVLp}
  \big(L_{p_0}(\O; E_0),\, L_{p_1}(\O;E_1)\big)_{\t,p}=L_{p}\big(\O; (E_0,\, E_1)_{\t,p}\big) \;\text{ with equivalent norms}
 \eeq
for any compatible pair $(E_0,\,E_1)$ of Banach spaces.

\smallskip

The present note concerns interpolation theory in the category of operator spaces. We refer to \cite{ER, pis-intro} for operator space theory. The complex and real interpolations for operator spaces are developed in \cite{pis-OH} and \cite{xu-op}, respectively. Unless explicitly stated otherwise, all (commutative and noncommutative) $L_p$-spaces in the sequel are equipped with their natural operator space structure as defined in \cite{pis-NCLp, pis-intro}.  Pisier proved that \eqref{comLp} and \eqref{comVLp} remain true in the category of operator spaces, that is, these equalities hold completely isometrically,  $(E_0,\,E_1)$ being, of course, assumed to be operator spaces in the case of \eqref{comVLp}.

It is more natural to work with noncommutative $L_p$-spaces in the category of operator spaces. Let $\M$ be a semifinite von Neumann algebra equipped with a normal semifinite faithful trace $\tau$. Let $L_p(\M)$ denote the associated noncommutative $L_p$-space (cf. \cite{px}). If $\M=B(\el_2)$ with the usual trace, $L_p(\M)$ is the Schatten $p$-class $S_p$. If $\M$ is hyperfinite, Pisier \cite{pis-NCLp} also introduced the vector-valued $L_p(\M;E)$ for an operator space $E$, and showed that \eqref{comVLp} continues to hold in this more general setting:
  \beq\label{comVNLp}
 \big(L_{p_0}(\M; E_0),\, L_{p_1}(\M;E_1)\big)_{\t}=L_{p}\big(\M; (E_0,\, E_1)_{\t}\big) \;\text{ completely isometrically}
 \eeq
 for any hyperfinite $\M$ and any compatible pair $(E_0,\, E_1)$ of operator spaces.

However, real interpolation does not behave as smoothly as complex interpolation in the category of operator spaces. The problem whether \eqref{comVNLp} can be extended to real interpolation was left unsolved in \cite{xu-op},  see Problems~6.1, 6.2 and 6.4 there.  Let $1<p<\8$. Using the Banach space equality
  $$L_{p}(\M)=\big(L_{\8}(\M),\, L_{1}(\M)\big)_{\frac1p, p},$$
 we can equip $L_{p}(\M)$ with another operator space structure via real interpolation as in \cite{xu-op}, the resulting operator space is denoted by $L_{p, p}(\M)$. Then Problem~6.1 of \cite{xu-op} asks whether $L_{p, p}(\M)=L_{p}(\M)$ completely isomorphically for $p\neq2$ (the answer is affirmative for $p=2$). The following result resolves this problem in the negative.

 \begin{thm}\label{Lpp}
  Let  $1<p<\8$ with $p\neq2$. Then $L_{p, p}(\M)=L_{p}(\M)$ completely isomorphically if and only if $\M$ is finite dimensional.
   \end{thm}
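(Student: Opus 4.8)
The plan is to handle the two implications separately, the work being entirely in the converse. For the ``if'' implication: if $\M$ is finite-dimensional, then $L_\infty(\M)$, $L_1(\M)$ and $L_p(\M)$ share one and the same finite-dimensional underlying vector space $V$, and by the noncommutative analogue of \eqref{realLp} the spaces $L_{p,p}(\M)$ and $L_p(\M)$ are two operator space structures on $V$ with equivalent $M_1$-norms; I would then invoke the standard fact that any two operator space structures $E_1,E_2$ on a fixed finite-dimensional space are completely isomorphic --- factoring the identity $E_1\to\min(V)\to\max(V)\to E_2$ bounds $\|\mathrm{id}\colon E_1\to E_2\|_{cb}$ by $\|\mathrm{id}\colon\min(V)\to\max(V)\|_{cb}$ times the Banach norm $\|\mathrm{id}\colon E_1\to E_2\|$, hence by a constant depending only on $\dim V$ times a finite quantity, and symmetrically for the inverse.

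For the converse I would argue by contraposition: assuming $\M$ infinite-dimensional, I show $L_{p,p}(\M)$ and $L_p(\M)$ are not completely isomorphic, and I begin by reducing to $\M=\ell_\infty$. An infinite-dimensional semifinite $\M$ contains an infinite family $(e_n)_{n\ge1}$ of mutually orthogonal nonzero projections of finite trace; setting $q=\sum_ne_n$ and $N=\{\sum_n\lambda_ne_n:\lambda\in\ell_\infty\}\cong\ell_\infty$, the map $P(x)=\sum_n\tau(e_nxe_n)\tau(e_n)^{-1}e_n$ is a normal completely positive contraction $\M\to N$ with $P|_N=\mathrm{id}_N$ and $\tau\circ P\le\tau$, so the couple $(L_\infty(N),L_1(N))$ --- which is precisely the couple defining $L_{p,p}(\ell_\infty)$ --- is a completely $1$-complemented subcouple of $(L_\infty(\M),L_1(\M))$. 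Since operator space real interpolation carries completely $1$-complemented subcouples to completely $1$-complemented subspaces, $L_{p,p}(\ell_\infty)$ is completely $1$-complemented in $L_{p,p}(\M)$, while $\ell_p=L_p(\ell_\infty)$, with its natural operator space structure (realised in $L_p(\M)$ as the closed span of the normalised $e_n$), is completely $1$-complemented in $L_p(\M)$. Hence a complete isomorphism $L_{p,p}(\M)\cong L_p(\M)$ would send $L_{p,p}(\ell_\infty)$ onto a completely complemented subspace $W$ of $L_p(\M)$ that is Banach-isomorphic to $\ell_p$ (by the analogue of \eqref{realLp}) and completely isomorphic to $L_{p,p}(\ell_\infty)$, with controlled constants throughout; invoking the structure of completely complemented subspaces of noncommutative $L_p$-spaces --- namely that a completely complemented subspace Banach-isomorphic to $\ell_p$ is completely isomorphic to $\ell_p$ --- one would obtain a complete isomorphism $L_{p,p}(\ell_\infty)\cong\ell_p$. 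So it suffices to prove that $L_{p,p}(\ell_\infty)$ is \emph{not} completely isomorphic to $\ell_p$.

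This in turn reduces to a quantitative estimate for the finite truncations $E_p^n=(\ell_\infty^n,\ell_1^n)_{\frac1p,p}$ carrying the operator space structure of \cite{xu-op}: one must show $d_{cb}(E_p^n,\ell_p^n)\to\infty$ as $n\to\infty$, where $\ell_p^n$ has its natural structure $M_k(\ell_p^n)=\ell_p^n(S_p^k)$. The crucial --- and at first counterintuitive --- point is that the operator space real interpolation functor of \cite{xu-op} does \emph{not} satisfy ``$M_k$ commutes with real interpolation'': were one to set $M_k(E_p^n)=(M_k(\ell_\infty^n),M_k(\ell_1^n))_{\frac1p,p}$, then \eqref{realVLp} applied level by level, together with $(S_\infty^k,S_1^k)_{\frac1p,p}=S_p^k$ (with constant independent of $k$), would force $M_k(E_p^n)\cong\ell_p^n(S_p^k)$ uniformly in $n$ and $k$, hence $E_p^n\cong\ell_p^n$ uniformly --- exactly what we must exclude. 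Working instead with the genuine construction, in which the structure on the real interpolation space is assembled from $\ell_p$-direct sums of the dyadic $K$-functional blocks, the task is to show that this construction necessarily carries a row/column-type (Hilbertian) defect of $E_p^n$ relative to $\ell_p^n$ and to bound $d_{cb}(E_p^n,\ell_p^n)$ from below, unboundedly in $n$ precisely because $p\neq2$. Combined with the reduction above, this yields $L_{p,p}(\M)\not\cong L_p(\M)$ for every infinite-dimensional $\M$.

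The hard part, and the genuinely new ingredient, is this last estimate: one must work with the actual definition of operator space real interpolation from \cite{xu-op} on a concrete couple such as $(\ell_\infty,\ell_1)$ --- the naive ``$M_k$-commutes'' structure being a trap that trivialises, and indeed would falsify, the theorem --- and extract the lower bound for $d_{cb}(E_p^n,\ell_p^n)$; by real interpolation duality ($L_{p,p}(\M)^*=L_{p',p'}(\M)$, $L_p(\M)^*=L_{p'}(\M)$) one may restrict to $p>2$ in doing so. A secondary technical point is the transfer step in the reduction, where one genuinely uses that $L_p(\M)$ is the $L_p$-space of a von Neumann algebra, to recognise completely complemented, $\ell_p$-isomorphic subspaces as completely isomorphic to $\ell_p$.
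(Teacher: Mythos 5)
Your proposal is a plan rather than a proof: the decisive step is explicitly deferred. You correctly dispose of the ``if'' direction (any two operator space structures on a finite-dimensional space are completely isomorphic), and you correctly locate the problem in an infinite-dimensional commutative subalgebra that is the range of a trace-preserving conditional expectation. But the entire content of the theorem is the lower bound you yourself label ``the hard part, and the genuinely new ingredient,'' namely that $d_{cb}\big((\ell_\infty^n,\ell_1^n)_{\frac1p,p},\,\ell_p^n\big)\to\infty$, and you give no mechanism for proving it beyond the (correct but non-substitutive) observation that the naive ``$M_k$ commutes with real interpolation'' structure would trivialise the question. Worse, your reduction points in the unhelpful direction: the paper reduces to $\mathcal{M}=L_\infty([0,1])$ rather than to $\ell_\infty$ (showing that failure for $L_\infty([0,1])$ forces failure for $\ell_\infty$, not the other way around), precisely because the obstruction it exhibits lives in $L_p([0,1])$ and not in $\ell_p$. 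Concretely, the paper writes $L_{p,p}=(L_{p_0},L_{p_1})_{\theta,p}$ with $1<p_0<p_1<\infty$, passes to the column space $C_p(L_{p,p})=(C_p(L_{p_0}),C_p(L_{p_1}))_{\theta,p}$, and restricts to the span of the Rademacher functions: by the noncommutative Khintchine inequality ${\rm Rad}_{p_i}\cong CR_{p_i}$ completely complementedly, and $C_p(CR_{p_i})\cong S_{r_i}$, so the Rademacher subspace of $C_p(L_{p,p})$ is the Lorentz--Schatten space $(S_{r_0},S_{r_1})_{\theta,p}=S_{2,p}$, whereas the corresponding subspace of $C_p(L_p)$ is $S_2$; these differ exactly when $p\neq2$. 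No analogue of this Hilbertian complemented subspace exists inside $\ell_p$, and your proposal offers no replacement for it.

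A secondary flaw is the transfer step in which you invoke ``the structure of completely complemented subspaces of noncommutative $L_p$-spaces'' to upgrade a completely complemented, Banach-$\ell_p$-isomorphic subspace to a completely isomorphic copy of $\ell_p$. No such general statement is proved in the paper or available off the shelf, and it would itself require a serious argument. It is also unnecessary: $L_{p,p}(\mathcal{M})=L_p(\mathcal{M})$ completely isomorphically means the \emph{formal identity} is a complete isomorphism, so one simply composes with the completely bounded inclusion and conditional expectation onto the subalgebra to restrict the statement to the subalgebra, as the paper does; no classification of complemented subspaces is needed.
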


Consequently, the answers to all three Problems~6.1, 6.2 and 6.4 of \cite{xu-op} are negative. In particular, neither \eqref{realLp} nor \eqref{realVLp} holds in the category of operator spaces.

\medskip

The next theorem provides an even worse answer to Problem~6.4. It shows that \eqref{comVNLp} extends to real interpolation at the Banach space level only in the commutative case. Recall that one can define $L_p(\M;\el_q)$ for any von Neumann algebra $\M$ (see section~\ref{Proof of Theorem2} for more information). $L_p(\M;\el_q)$ coincides with Pisier's space when $\M$ is hyperfinite. Note that $L_p(\M;\el_q)$ is defined only as a Banach space if $\M$ is not hyperfinite.

 \begin{thm}\label{Lppb}
  Let  $1<p<\8$ and $1\le q\le\8$ with $p\neq q$. Then
  \beq\label{Lpinfinite}
  \big(L_{\8}(\M; \el_q),\, L_{1}(\M;\el_q)\big)_{\frac1p,p}=L_{p}(\M; \el_q) \;\text{ with equivalent norms}
  \eeq
   if and only if $\M$ is isomorphic, as a Banach space, to $L_\8(\O,\mu)$ for some measure space $(\O,\mu)$.
\end{thm}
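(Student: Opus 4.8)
We treat the two implications separately; the ``only if'' direction carries the substance.

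\textbf{``If''.} Assume $\M$ is isomorphic, as a Banach space, to $L_\8(\O,\mu)$. Then $\M$ must be subhomogeneous, $\M=\bigoplus_{k=1}^N L_\8(\O_k)\bar\otimes M_{n_k}$: otherwise (its type $\mathrm{I}$ part having unbounded degrees, or $\M$ having a type $\mathrm{II}$ summand --- recall $\M$ is semifinite) $\M$ would contain, for every $n$, a norm-one complemented copy of $M_n$, which contradicts isomorphic injectivity of $L_\8$ since the injectivity constant of $M_n$ grows with $n$. Conversely such a subhomogeneous $\M$ is Banach-isomorphic to a commutative $L_\8$, because $M_{n_k}$ is Banach-isomorphic to $\ell_\8^{n_k^2}$. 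Now, for \emph{fixed} $n$, all the spaces $S_s^n[\el_q]:=L_s(M_n;\el_q)$, $1\le s\le\8$, are mutually $c(n)$-isomorphic via the identity --- this is read off Pisier's description of $L_s(M_n;\el_q)$ through factorizations of $n\times n$ matrices --- so the couple $\big(S_\8^n[\el_q],S_1^n[\el_q]\big)$ is, up to $c(n)$, a constant couple, and hence
\[
\big(S_\8^n[\el_q],\,S_1^n[\el_q]\big)_{\frac1p,p}=S_p^n[\el_q]
\]
with constants depending only on $n$. Applying \eqref{realVLp} fiberwise over each $\O_k$ with $E_0=S_\8^{n_k}[\el_q]$, $E_1=S_1^{n_k}[\el_q]$, and summing over $k$, yields \eqref{Lpinfinite} for $\M$.

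\textbf{``Only if'': reduction.} By the first part, $\M$ fails to be Banach-isomorphic to an $L_\8$-space exactly when it is not subhomogeneous; so assume $\M$ is not subhomogeneous, and let us show that \eqref{Lpinfinite} fails. For every $n$ the algebra $\M$ contains a copy of $M_n$ that is the range of a norm-one projection on $L_s(\M;\el_q)$ for every $s\in[1,\8]$, simultaneously in $s$ (e.g. a corner compression followed by a trace-preserving conditional expectation). This exhibits $\big(S_\8^n[\el_q],S_1^n[\el_q]\big)$ as a norm-one retract of the couple $\big(L_\8(\M;\el_q),L_1(\M;\el_q)\big)$; since a norm-one retraction of a couple preserves the $K$-functional, validity of \eqref{Lpinfinite} for $\M$ with constant $C$ forces
\[
\big(S_\8^n[\el_q],\,S_1^n[\el_q]\big)_{\frac1p,p}=S_p^n[\el_q]
\]
with the \emph{same} constant $C$ for every $n$. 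It thus remains to prove that, for $1<p<\8$, $1\le q\le\8$ and $p\ne q$, this equality cannot hold with constants independent of $n$.

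\textbf{``Only if'': the core estimate and the main obstacle.} In the commutative case the above equality holds uniformly, by \eqref{realVLp} (and at the level of complex interpolation it holds in general, by \eqref{comVNLp}); what the real method fails to detect in the noncommutative case is the interplay between the $\el_q$-direction and the row/column asymmetry of $S_p^n$. The plan is to build, for each $n$, an element $\xi_n\in S_p^n[\el_q]$ out of matrix units --- mixing ``$\el_q$-spread'' rows and columns --- for which the true norm $\|\xi_n\|_{S_p^n[\el_q]}$ is governed by an $\el_q$-type aggregation over about $n$ sub-blocks, whereas the $K$-functional $K\big(t,\xi_n;S_\8^n[\el_q],S_1^n[\el_q]\big)$, and so the interpolation norm, registers only an $\el_p$-type aggregation; the two differ by a factor polynomial in $n$ precisely because $p\ne q$, and coincide when $p=q$. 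The principal difficulty is to produce matching two-sided estimates for both quantities: for $\|\xi_n\|_{S_p^n[\el_q]}$ via Pisier's factorization description $x_i=ay_ib$, and for $K\big(t,\xi_n;S_\8^n[\el_q],S_1^n[\el_q]\big)$ via an analysis of the optimal decompositions of $\xi_n$ in the couple. This is the same phenomenon underlying the paper's third main result --- the sharp failure for $p<r$ of $\big\|\big(\sum_i x_i^q\big)^{\frac1q}\big\|_{L_p(\M)}\le\big\|\big(\sum_i x_i^r\big)^{\frac1r}\big\|_{L_p(\M)}$ for positive elements --- and that inequality can be used to drive the lower bound on $\|\xi_n\|_{S_p^n[\el_q]}$.
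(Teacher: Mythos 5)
Your ``if'' direction and your reduction are sound and essentially match the paper: Banach-isomorphism to a commutative $L_\8$ forces subhomogeneity, the matrix factors are handled by \eqref{realVLp} (the paper is terser here but does the same thing), and in the converse direction the presence of $\mathbb M_n$ for infinitely many $n$ as ranges of trace-preserving conditional expectations reduces everything to disproving the uniform (in $n$) validity of $\big(S_\8^n(\el_q),S_1^n(\el_q)\big)_{\frac1p,p}=S_p^n(\el_q)$, equivalently to disproving it for $B(\el_2)$.

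The genuine gap is that the core of the ``only if'' direction is never carried out: you announce a plan to build elements $\xi_n$ ``mixing $\el_q$-spread rows and columns'' and explicitly flag that ``the principal difficulty is to produce matching two-sided estimates'' for $\|\xi_n\|_{S_p^n[\el_q]}$ and for the $K$-functional --- but those estimates are exactly the theorem, and no construction or bound is given. Moreover the quantitative target you set (a discrepancy ``polynomial in $n$'') is not what actually occurs, which suggests the plan as stated would not close. The paper's mechanism is different and much more rigid: inside $C_s(\el_q)\subset S_s(\el_q)$ one takes the diagonal subspace $D_{s,q}$ spanned by $e_{i,1}\otimes e_i$, which is completely $1$-complemented (unconditionality plus averaging) and \emph{isometrically} equal to $\el_{r_{s,q}}$ with $\frac1{r_{s,q}}=\frac1{2s}+\frac1{2q}$ (computed from the factorization descriptions at $s=\8$ and $s=1$, then duality and complex interpolation). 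Restricting the couple to this retract turns the problem into classical real interpolation of scalar sequence spaces: $\big(\el_{r_{\8,q}},\el_{r_{1,q}}\big)_{\frac1p,p}$ is the Lorentz space $\el_{r_{p,q},p}$, whereas the corresponding subspace of $S_p(\el_q)$ is $\el_{r_{p,q}}$, and $\el_{r_{p,q},p}=\el_{r_{p,q}}$ forces $r_{p,q}=p$, i.e.\ $q=p$. Note that the resulting divergence of constants on $n\times n$ truncations is only logarithmic in $n$ (it is the Lorentz-versus-$\el_r$ discrepancy), not polynomial; your appeal to Theorem~\ref{decreasing} to drive a lower bound also does not obviously apply, since that result concerns $p<r$ (indices below the $\el_q$-exponent and positivity), while here $p$ ranges over all of $(1,\8)\setminus\{q\}$. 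As it stands, the proposal identifies the right reduction but leaves the theorem unproved.
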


\medskip

Our third theorem does not concern the real interpolation of the $L_{p}(\M; \el_q)$ spaces but gives a result that is to be compared with the norm of these spaces. In the commutative case, the norm of a sequence $(x_i)$ in  $L_{p}(\O; \el_q)$ is given by
 $$\|(x_i)\|_{L_{p}(\O; \el_q)}=\big\|\big(\sum_i|x_i|^q\big)^{\frac1q}\big\|_{L_p(\O)}\,.$$
It is well known that this expression is no longer valid in the noncommutative setting, which is one source of many difficulties in noncommutative analysis. The following theorem shows that another classical property of the norm $\|(x_i)\|_{L_{p}(\O; \el_q)}$ does not extend to the noncommutative case. The index $p$ is now allowed to go below $1$, $L_p^+(\M)$ denotes the positive cone of $L_p(\M)$.

\begin{thm}\label{decreasing}
 Let $0<r<q<\8$ and  $0<p\le\8$.
 \begin{enumerate}[\rm(i)]
 \item If $p\ge r$, then
 \beq\label{decreasing in q}
 \big\|\big(\sum_ix_i^q\big)^{\frac1q}\big\|_{L_p(\M)}\le\big\|\big(\sum_ix_i^r\big)^{\frac1r}\big\|_{L_p(\M)}
 \eeq
 for any finite sequence $(x_i)\subset L_p^+(\M)$.
\item If $p<r$ and $\M$ is not  isomorphic, as a Banach space, to $L_\8(\O,\mu)$ for some measure space $(\O,\mu)$, then there exists no constant
$C$ such that
  \beq\label{negative}
 \big\|\big(\sum_ix_i^q\big)^{\frac1q}\big\|_{L_p(\M)}
 \le C\,\big\|\big(\sum_ix_i^r\big)^{\frac1r}\big\|_{L_p(\M)}
 \eeq
for any finite sequence $(x_i)\subset L_p^+(\M)$.
 \end{enumerate}
\end{thm}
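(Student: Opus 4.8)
\medskip\noindent\textit{Proof proposal.}

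For (i) the plan is to normalise and reduce. Raising the desired inequality to the power $q$ and setting $y_i=x_i^r$, $s=q/r>1$, $m=p/r$ (so that $m\ge1$, since $p\ge r$), it becomes
\[
\big\|\textstyle\sum_iy_i^s\big\|_{m/s}\le\big\|\textstyle\sum_iy_i\big\|_{m}^{\,s}\qquad\text{for every finite }(y_i)\subset L_m^+(\M),
\]
where $\|\cdot\|_{\rho}$ denotes the $L_\rho(\M)$ (quasi-)norm. I would deduce this from the weak submajorization $\sum_iy_i^s\prec_w(\sum_iy_i)^s$, valid for $y_i\ge0$ and $s\ge1$ (a Rotfel'd/Bourin--Uchiyama type inequality, which holds at the level of generalized singular value functions of $\tau$-measurable operators). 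If $q\le p$, then $m/s\ge1$, and applying to this submajorization the convex increasing function $t\mapsto t^{m/s}$ followed by $\tau$ gives $\tau\big((\sum y_i^s)^{m/s}\big)\le\tau\big((\sum y_i)^m\big)$, which is the claim. If $q\ge p$, then $\rho:=m/s\le1$; here I would first use the $\rho$-subadditivity of $\|\cdot\|_{\rho}^{\rho}$ to get $\|\sum y_i^s\|_{\rho}^{\rho}\le\sum_i\tau(y_i^m)$, and then the submajorization with exponent $m\ge1$ (again followed by $\tau$) to get $\sum_i\tau(y_i^m)=\tau(\sum_iy_i^m)\le\tau\big((\sum_iy_i)^m\big)$. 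The two cases must be kept apart, since weak submajorization does \emph{not} pass to $L_\rho$-quasi-norms when $\rho<1$.

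For (ii) I would first reduce to matrix algebras. If $\M$ is semifinite and not Banach isomorphic to any commutative von Neumann algebra, then $\M$ is not subhomogeneous: a subhomogeneous semifinite von Neumann algebra is a finite direct sum $\bigoplus_{k<N}M_k(L_\8(\O_k))$, and since $M_k$ has dimension $k^2$ this is Banach isomorphic to a commutative $L_\8$ with a uniform constant. Hence for every $N$ there is a von Neumann subalgebra of $\M$ $*$-isomorphic to $M_N$ on which $\tau$ is finite (and faithful), so that $L_p(M_N)$ embeds isometrically into $L_p(\M)$ in a way that preserves both sides of \eqref{negative}. It therefore suffices to make the best constant in \eqref{negative} for $M_N$ tend to $\8$ as $N\to\8$.

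The core of (ii) is then a ``seed'' plus an amplification. In $M_n$ with the usual trace take $x_i=2\,|f_i\rangle\langle f_i|$ with $f_i=(e_1+e_{i+1})/\sqrt2$, $i=1,\dots,n-1$. Each $x_i$ has the single nonzero eigenvalue $2$, so $x_i^t=2^{t-1}x_i$ and $(\sum_ix_i^t)^{1/t}=2^{(t-1)/t}A^{1/t}$ with $A=\sum_ix_i$; a direct computation shows that $A$ has eigenvalues $n$ (once), $1$ (with multiplicity $n-2$) and $0$ (once), whence $\big\|(\sum_ix_i^t)^{1/t}\big\|_{L_p(M_n)}=2^{(t-1)/t}\big(n-2+n^{p/t}\big)^{1/p}$. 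Thus the ratio of the two sides of \eqref{negative} for this family equals $2^{1/r-1/q}\cdot\big((n-2+n^{p/q})/(n-2+n^{p/r})\big)^{1/p}$, which, since $p<r<q$, converges to $2^{1/r-1/q}>1$ as $n\to\8$; fix $n_0$ for which this ratio, call it $\rho_0$, exceeds $1$. For the amplification I would use that the ratio in \eqref{negative} is multiplicative under von Neumann tensor products: if $(x_i)\subset L_p^+(\M_1)$ and $(x_j')\subset L_p^+(\M_2)$, then $(x_i\otimes x_j')\subset L_p^+(\M_1\bar\otimes\M_2)$ satisfies $\big(\sum_{i,j}(x_i\otimes x_j')^q\big)^{1/q}=(\sum_ix_i^q)^{1/q}\otimes(\sum_jx_j'^q)^{1/q}$, together with $\|u\otimes v\|_{L_p}=\|u\|_{L_p}\,\|v\|_{L_p}$. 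Taking the $k$-fold tensor power of the seed inside $M_{n_0}^{\otimes k}=M_{n_0^k}\subset\M$ produces a finite positive family with ratio $\rho_0^{\,k}$, which tends to $\8$; this proves that no constant $C$ works.

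The part I expect to be delicate is not the computation but locating the right ingredients. For (ii) these are: the structural fact that ``not Banach isomorphic to a commutative von Neumann algebra'' forces arbitrarily large matrix subalgebras carrying a finite faithful trace, and the recognition that a single seed with ratio \emph{strictly} above $1$ already suffices, tensor powers supplying the unboundedness. It is instructive that the obvious candidates fail: conjugating a fixed positive operator by many unitaries and summing makes $\sum_ix_i^t$ essentially a scalar, so the noncommutativity — and with it any gain over the commutative estimate — is averaged away. For (i) the only genuine point is to invoke the correct weak-majorization inequality and to handle the regime $\rho<1$ separately.
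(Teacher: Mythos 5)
Your proposal is correct in both parts, but it does not follow the paper's route, so a comparison is in order. For part (i) the paper is entirely self-contained: it first proves the log-convexity of $q\mapsto\|(\sum_ix_i^q)^{1/q}\|_p^q$ via the H\"older inequality, then establishes the two anchor cases $q=2$ (by the subadditivity of $\tau(y^{\theta})$ for $\theta\le1$ and operator monotonicity when $p\le2$, and by a Rademacher averaging argument with $-x\le\sum_i\e_ix_i\le x$ when $p>2$) and $q=p$ (by iterating the $q=2$ case through $\|\cdot\|_{p/2}$), and finally interpolates/iterates to cover all $(p,q)$. You instead outsource the whole of (i) to the superadditivity submajorization $\sum_iy_i^s\prec_w(\sum_iy_i)^s$ ($s\ge1$), keeping only the elementary $\rho$-subadditivity and McCarthy's trace inequality for the regime $m/s\le1$; your case split at $\rho=1$ is exactly right, since submajorization does not pass to $L_\rho$-quasi-norms below $1$. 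This is shorter but rests on a nontrivial external theorem (Kosem, Bourin--Uchiyama). Two points of care there: the $n$-operator version does \emph{not} follow from the two-operator version by adding the remaining summands to both sides (weak majorization is not stable under adding a common positive operator), so you must quote the several-operator statement itself; and you need its extension to $\tau$-measurable operators in a semifinite algebra, which exists in the literature but should be cited explicitly. The paper's argument avoids both issues at the cost of length. For part (ii) the underlying mechanism is the same in both proofs, namely multiplicativity of the ratio under tensor powers, but it is deployed in opposite directions: the paper tensorizes to force the best constant down to $C=1$ and then refutes $C=1$ by a perturbative $2\times2$ example ($x=\bigl(\begin{smallmatrix}1&1\\1&1\end{smallmatrix}\bigr)$, $y=\mathrm{diag}(0,t)$, expanding in $t\to0$), whereas you exhibit an explicit $n\times n$ seed of rank-one multiples of projections whose ratio tends to $2^{1/r-1/q}>1$ and then tensor-amplify to blow the ratio up. Your seed computation checks out ($A$ has spectrum $\{n,1^{(n-2)},0\}$, and $p<r$ is exactly what makes the $n^{p/t}$ terms negligible), and your reduction to large matrix subalgebras is the same structural step the paper performs via the type decomposition; since all quantities in \eqref{negative} are intrinsic to the subalgebra, you are right that no conditional expectation is needed here, unlike in Theorem~\ref{Lppb}. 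Your version has the minor advantage of being fully constructive and quantitative.
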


The previous theorems will be respectively proved in the next three sections.

%%%%%%%%%%%%%%%%%%%%%%%%%%%%%%%%%%%%%%%%%%%%%%%%%%%%%%
%%%%%%%%%%%%%%%%%%%%%%%%%%%%%%%%%%%%%%%%%%%%%%%%%%%%%%

\section{Proof of Theorem~\ref{Lpp}}

%%%%%%%%%%%%%%%%%%%%%%%%%%%%%%%%%%%%%%%%%%%%%%%%%%%%%%
%%%%%%%%%%%%%%%%%%%%%%%%%%%%%%%%%%%%%%%%%%%%%%%%%%%%%%

We will need some preparations on column and row Hilbertian operator spaces. Let $C_p$ (resp.
$R_p$) denote the first column (resp. row) subspace of $S_p$ consisting of matrices whose
all entries but those in the first column (resp. row) vanish.  We have the following completely isometric
identifications:
 \beq\label{duality Cp-Rp}
 (C_p)^*\cong C_{p'}\cong R_p\quad\mbox{and}\quad
 (R_p)^*\cong R_{p'}\cong C_p\,,\quad\forall\;1\le p\le\8,
 \eeq
where $p'$ denotes the conjugate index of $p$. $C_p$ and $R_p$ can  be also defined via complex interpolation from $C=C_\8$
and $R=R_\8$. We view $(C, R)$ as a compatible pair by identifying
both of them with $\ell_2$ (at the Banach space level), i.e., by
identifying the canonical bases $(e_{k,1})$ of $C_p$ and
$(e_{1,k})$ of $R_p$ with $(e_k)$ of $\ell_2$. Then we have the following completely isometric equalities
\beq\label{CR}
C_p=(C,\ R)_{\frac1p}=(C_\8,\ C_1)_{\frac1p}\quad\mbox{and}\quad
 R_p=(R,\ C)_{\frac1p}=(R_\8,\ R_1)_{\frac1p}.
 \eeq
We refer to \cite{pis-OH, pis-NCLp} for more details.

Let ${\rm Rad}_p$ be the closed subspace spanned by the Rademacher sequence  $(\e_n)$ in $L_p([0,\,1])$. Then the noncommutative Khintchine inequality can be reformulated in terms of column and row spaces (see \cite{LPP, pis-NCLp}). To this end, we introduce
 $$CR_p=C_p+R_p\;\text{for}\; p\le 2\quad \text{and}\quad CR_p=C_p\cap R_p\;\text{for}\; p> 2.$$

\begin{lem}\label{kintchine}
 Let $1<p<\8$. Then ${\rm Rad}_p=CR_p$ completely isomorphically. Moreover, the orthogonal projection from $L_2([0,\,1])$ onto ${\rm Rad}_2$ extends to a completely bounded projection from $L_p([0,\,1])$ onto ${\rm Rad}_p$. All relevant constants depend only on $p$.
\end{lem}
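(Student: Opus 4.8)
The plan is to read the statement as the operator‑space reformulation of the noncommutative Khintchine inequalities (Lust--Piquard, Lust--Piquard--Pisier), and to reduce the complete‑isomorphism assertion to a comparison of two vector‑valued $L_p$‑norms. First I would use that, in the category of operator spaces built from $L_p$‑spaces, a map $u\colon E\to F$ between subspaces $E\subset L_p(\M_0)$, $F\subset L_p(\M_1)$ satisfies $\|u\|_{cb}=\|u\otimes\mathrm{id}_{S_p}\colon S_p[E]\to S_p[F]\|$, together with Pisier's Fubini identification $S_p[L_p[0,1]]=L_p\big(B(\el_2)\,\overline{\otimes}\,L_\8[0,1]\big)=L_p([0,1];S_p)$ (isometrically). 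Under this, a typical element of $S_p[{\rm Rad}_p]$ is a finitely supported sum $\sum_i\e_i\otimes a_i$ with $a_i\in S_p$, and its norm is $\big\|\sum_i\e_i\,a_i\big\|_{L_p([0,1];S_p)}$. So the problem becomes: compare this quantity with the norm of $(a_i)$ computed in $S_p[CR_p]$, and likewise for the formal inverse map $e_i\mapsto\e_i$.

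Next I would invoke the relevant identifications from \cite{pis-NCLp}. Using $C_p=(C_\8,C_1)_{\frac1p}$, $R_p=(R_\8,R_1)_{\frac1p}$ from \eqref{CR} and the compatibility of the bifunctor $S_p[\,\cdot\,]$ with complex interpolation, one obtains the standard formulae $\|(a_i)\|_{S_p[C_p]}=\big\|(\sum_ia_i^*a_i)^{\frac12}\big\|_{S_p}$ and $\|(a_i)\|_{S_p[R_p]}=\big\|(\sum_ia_ia_i^*)^{\frac12}\big\|_{S_p}$; combined with the description of the operator space structure on $C_p+R_p$ (quotient of an $\el_1$‑direct sum) and on $C_p\cap R_p$ (subspace of an $\el_\8$‑direct sum), this yields
$$\|(a_i)\|_{S_p[C_p+R_p]}=\inf\Big\{\big\|\big({\textstyle\sum_i}b_i^*b_i\big)^{\frac12}\big\|_{S_p}+\big\|\big({\textstyle\sum_i}c_ic_i^*\big)^{\frac12}\big\|_{S_p}:\ a_i=b_i+c_i\Big\}\quad(p\le2)$$
and $\|(a_i)\|_{S_p[C_p\cap R_p]}=\max\big\{\big\|\big(\sum_ia_i^*a_i\big)^{\frac12}\big\|_{S_p},\ \big\|\big(\sum_ia_ia_i^*\big)^{\frac12}\big\|_{S_p}\big\}$ for $p>2$. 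The noncommutative Khintchine inequality for the (semifinite) trace on $B(\el_2)$ states exactly that $\big\|\sum_i\e_i\,a_i\big\|_{L_p([0,1];S_p)}$ is two‑sidedly equivalent to the right‑hand side above, with constants depending only on $p$. Hence the canonical map $\e_i\mapsto e_i$ and its inverse are bounded after tensoring with $\mathrm{id}_{S_p}$, i.e.\ ${\rm Rad}_p=CR_p$ completely isomorphically, with constants depending only on $p$.

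For the projection: the orthogonal projection $P$ of $L_2[0,1]$ onto ${\rm Rad}_2$ is $Pf=\sum_i\big(\int_0^1f\,\e_i\big)\e_i$, and by the same Fubini identification $\|P\colon L_p[0,1]\to L_p[0,1]\|_{cb}=\big\|P\otimes\mathrm{id}_{S_p}\colon L_p([0,1];S_p)\to L_p([0,1];S_p)\big\|$, where $P\otimes\mathrm{id}_{S_p}$ is the Rademacher (Bochner) projection of $L_p([0,1];S_p)$. Since $S_p$ ($1<p<\8$) has type $\min(2,p)$ with constant depending only on $p$, Pisier's theorem gives that $S_p$ is $K$‑convex with $K$‑convexity constant depending only on $p$; hence the Rademacher projection is bounded on $L_p([0,1];S_p)$ with norm depending only on $p$. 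As its range is ${\rm Rad}_p$ (with its subspace operator space structure) and it restricts to the identity there, this is the asserted completely bounded projection, with $cb$‑norm depending only on $p$.

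The main obstacle is the second step: it requires setting up Pisier's vector‑valued spaces $S_p[E]$ and establishing the norm formulae for $S_p[C_p]$, $S_p[R_p]$ and for the sum and intersection of operator spaces — this is where the operator space structure genuinely enters, everything else being the classical (scalar) Khintchine inequality, the elementary boundedness of the Rademacher projection on scalar $L_p$, and the Fubini identification. A secondary point is to keep the constants: the $K$‑convexity constant of $S_p$, hence $\|P\|_{cb}$, must depend only on $p$, which one gets by tracking constants in the ``nontrivial type $\Rightarrow$ $K$‑convex'' theorem (alternatively, from the fact that $S_p$ is a UMD space with UMD constant controlled by $\max(p,p')$); and the Khintchine constants for $B(\el_2)$ are already known to depend only on $p$.
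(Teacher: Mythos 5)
The paper offers no proof of this lemma: it is quoted as the known operator-space reformulation of the noncommutative Khintchine inequality, with references to \cite{LPP} and \cite{pis-NCLp}. Your proposal is a faithful reconstruction of exactly the intended argument (compute cb-norms by tensoring with $S_p$, use the Fubini identification $S_p[L_p[0,1]]=L_p([0,1];S_p)$ together with the Lust-Piquard--Pisier inequality for $S_p$-valued coefficients, and obtain the projection from the $K$-convexity of $S_p$ with constant depending only on $p$), so it is sound and consistent with what the authors rely on. The one step to treat with care --- which you rightly single out as the main obstacle --- is the identification $S_p[C_p+R_p]\approx S_p[C_p]+S_p[R_p]$ (resp. $S_p[C_p\cap R_p]\approx S_p[C_p]\cap S_p[R_p]$): only one inclusion follows formally from the quotient (resp. subspace) description of the sum (resp. intersection) and the projectivity (resp. injectivity) of $S_p[\,\cdot\,]$, while the reverse inequality for this particular couple is itself part of the content of the cb-version of the Khintchine inequality established in \cite{pis-NCLp}, so it should be cited or reproved there rather than presented as a formal consequence of the general definitions.
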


If $E$ is an operator space, $C_p(E)$ (resp.
$R_p(E)$) denotes the first column (resp. row) subspace of the $E$-valued Schatten class $S_p(E)$. It
is clear that $C_p(E)$ and $R_p(E)$ are completely 1-complemented in $S_p(E)$. Consequently, applying \eqref{comVNLp} to $\M=B(\el_2)$, we get
 \beq\label{comVCp}
 \big(C_{p_0}(E_0),\, C_{p_1}(E_1)\big)_{\t}=C_{p}((E_0,\, E_1)_\t)
 \eeq
for any compatible pair $(E_0,\, E_1)$ of operator spaces, where $\frac1p=\frac{1-\t}{p_0}+\frac{\t}{p_1}$.

Note that $C_p(R_p)=S_p$ and $C_p(C_p)=S_2$ isometrically at the Banach space level. Here we represent the elements in $C_p(R_p)$ and $C_p(C_p)$ by infinite matrices in the canonical bases of $C_p$ and $R_p$. The following elementary fact is known to experts and implicitly contained in the proof of \cite[Lemma~5.9]{xu-emb}. We include its simple proof for completeness.

\begin{lem}\label{CR}
 Let $1\le p,q\le\8$. Let $r$ be determined by $\frac1r=\frac1{2p}+\frac1{2q'}$. Then
  $$C_p(CR_q)=S_r\;\text{ with equivalent norms}.$$
 \end{lem}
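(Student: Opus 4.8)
The plan is to compute $C_p(CR_q)$ by reducing it to $C_p(C_q)$ and $C_p(R_q)$, evaluating each of the latter two by complex interpolation, and then observing that one of them is contained in the other, so that the intersection or sum collapses.

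\emph{Reduction.} Viewing $(C_q,R_q)$ as a compatible couple via the canonical bases, I would first use that the column functor $C_p[\,\cdot\,]$ commutes with finite intersections and sums of a compatible couple of operator spaces, i.e. $C_p(E_0\cap E_1)=C_p(E_0)\cap C_p(E_1)$ and $C_p(E_0+E_1)=C_p(E_0)+C_p(E_1)$ with equivalent norms. This yields
$$C_p(CR_q)=C_p(C_q)\cap C_p(R_q)\quad(q>2),\qquad C_p(CR_q)=C_p(C_q)+C_p(R_q)\quad(q\le2).$$
Alternatively, by Lemma~\ref{kintchine} the space $CR_q={\rm Rad}_q$ is completely complemented in the commutative space $L_q([0,1])$, which reduces this point to the classical vector-valued case.

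\emph{The spaces $C_p(R_q)$ and $C_p(C_q)$.} I claim that, in the matrix representation, $C_p(R_q)=S_u$ with $\frac1u=\frac1{2p}+\frac1{2q}$, and $C_p(C_q)=S_r$ with $\frac1r=\frac1{2p}+\frac1{2q'}$, the latter exponent being the one in the statement. For $C_p(R_q)$ I would interpolate via \eqref{comVCp}, using $R_q=(R_\8,R_1)_{1/q}$ and $C_p=(C_\8,C_1)_{1/p}$: this gives $C_p(R_q)=\big(C_\8(R_q),C_1(R_q)\big)_{1/p}$ together with $C_\8(R_q)=\big(C_\8(R_\8),C_\8(R_1)\big)_{1/q}$ and $C_1(R_q)=\big(C_1(R_\8),C_1(R_1)\big)_{1/q}$, so everything reduces to four corner spaces. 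Now $C_\8(R_\8)=S_\8$ and $C_1(R_1)=S_1$ by the identity $C_p(R_p)=S_p$ recalled above; and the complete isometries $R_1\cong C_\8$, $R_\8\cong C_1$ from \eqref{duality Cp-Rp} identify $C_\8(R_1)$ with $C_\8(C_\8)$ and $C_1(R_\8)$ with $C_1(C_1)$, up to transposing the inner matrix index, so both equal $S_2$ by the identity $C_p(C_p)=S_2$ and the transpose invariance of the Schatten norms. Interpolating the corners gives $C_\8(R_q)=S_{2q}$ and $C_1(R_q)=S_{2q/(q+1)}$, and then $C_p(R_q)=S_u$ with $\frac1u=\frac1{2p}+\frac1{2q}$. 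Finally, the complete isometry $C_q\cong R_{q'}$ from \eqref{duality Cp-Rp}, again combined with transpose invariance, gives $C_p(C_q)=C_p(R_{q'})=S_r$ with $\frac1r=\frac1{2p}+\frac1{2q'}$.

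\emph{Conclusion.} If $q>2$ then $q'<q$, so $\frac1u<\frac1r$, i.e. $S_r\subseteq S_u$ as matrix spaces, and therefore $C_p(CR_q)=S_r\cap S_u=S_r$. If $q\le2$ then symmetrically $S_u\subseteq S_r$ and $C_p(CR_q)=S_r+S_u=S_r$. In both cases $C_p(CR_q)=S_r$ with equivalent norms, as claimed.

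The step I expect to be the main obstacle is the interpolation computation of $C_p(R_q)$: one must track carefully the transpose identifications between column- and row-type spaces so as to land, after interpolation, in an honest Schatten class in the prescribed matrix representation. The commutation of $C_p[\,\cdot\,]$ with $\cap$ and $+$ for the couple $(C_q,R_q)$ is more routine but should also be justified, e.g. via the complementation in $L_q([0,1])$ mentioned above.
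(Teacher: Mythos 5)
Your proof is correct and follows essentially the same route as the paper: decompose $CR_q$ into $C_q$ and $R_q$, compute $C_p(C_q)$ and $C_p(R_q)$ via \eqref{CR}, \eqref{comVCp} and \eqref{duality Cp-Rp} together with $C_p(C_p)=S_2$ and $C_p(R_p)=S_p$, and conclude by the Schatten-class inclusions. The only (immaterial) difference is that the paper interpolates $C_p(C_q)$ directly and obtains $C_p(R_q)$ by duality, whereas you do the mirror image.
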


\begin{proof}
  By \eqref{CR},  \eqref{comVCp} and \eqref{duality Cp-Rp}, we have the following isometric equalities
 \begin{align*}
  C_\8(C_q)
  &=\big(C_{\8}(C_\8),\, C_{\8}(C_1)\big)_{\frac1q}\\
  &=\big(C_{\8}(C_\8),\, C_{\8}(R_\8)\big)_{\frac1q}\\
  &=\big(S_2,\, S_\8\big)_{\frac1q}=S_{2q'}\,.
  \end{align*}
Similarly, $C_1(C_q)=S_{(2q)'}$ isometrically. Thus
  $$C_p(C_q)
  =\big(C_{\8}(C_q),\, C_{1}(C_q)\big)_{\frac1p}
  =\big(S_{2q'},\,S_{(2q)'}\big)_{\frac1p}=S_r.$$
Combining this with \eqref{duality Cp-Rp}, we also have
 $$C_p(R_q)=C_p(C_{p'}) =S_t\;\text{ isometrically},$$
 where $\frac1t=\frac1{2p}+\frac1{2q}$. Hence,
 $$C_p(CR_q)=C_p(C_q)+C_p(R_q)=S_r+S_t=S_r$$
for $q\le2$ and $C_p(CR_q)= S_r\cap S_t=S_r$ for $q>2$ too.
\end{proof}

\begin{proof}[Proof of Theorem~\ref{Lpp}]
 By the type decomposition of von Neumann algebras, if $\M$ is not finite dimensional, then $\M$ contains an infinite dimensional commutative $L_\8(\O,\mu)$ as subalgebra which is moreover the image of a trace preserving normal conditional expectation. Indeed, if the type I summand of $\M$ is infinite dimensional, then $\M$ contains an infinite dimensional commutative $L_\8(\O,\mu)$. On the other hand, if the type II$_\8$ summand of $\M$ exists, then $\M$ contains $\B(\el_2)$, so $\el_\8$ too. Finally, if the type II$_1$ summand of $\M$ exists, then $\M$ contains $L_\8([0,\,1])$. See \cite{Takesaki} for the type decomposition of von Neumann algebras.

Note that if $L_\8(\O,\mu)$ is infinite dimensional,  $L_\8(\O,\mu)$ contains, as subalgebra, either $L_\8([0,\,1])$ or $\el_\8$. On the other hand, if $L_{p,p}(\M)=L_p(\M)$ held for $\M=\el_\8$, it would do so for $\M=\el^n_\8$ uniformly in $n\ge1$. Then by a standard approximation argument, we see that it would hold for $\M=L_\8([0,\,1])$ too.

 Thus we are reduced to showing the theorem for the special case $\M=L_\8([0,\,1])$. Namely, we must show that $L_{p,p}([0,\,1])\not=L_p([0,\,1])$ completely isomorphically. In the rest of the proof, we will drop the interval $[0,\,1]$ from $L_{p}([0,\,1])$. By \cite[Theorem~5.4]{xu-op}, we choose $1<p_0<p_1<\8$ and $0<\t<1$ such that $\frac1p=\frac{1-\t}{p_0}+ \frac{\t}{p_1}$ and
  $$L_{p,p}=\big(L_{p_0},\, L_{p_1}\big)_{\t, p}\,.$$
Then by \cite[Proposition~6.3]{xu-op} and its proof, we have
  $$S_p(L_{p,p})=\big(S_p(L_{p_0}),\, S_p(L_{p_1})\big)_{\t, p}\,.$$
Using the complete complementation of $C_p(E)$ in $S_p(E)$, we deduce
$$C_p(L_{p,p})=\big(C_p(L_{p_0}),\, C_p(L_{p_1})\big)_{\t, p}\,.$$
On the other hand, by the complete complementation of ${\rm Rad}_{p_i}$ in $L_{p_i}$ for $i=0,1$, we get the following isomorphic embedding:
 $$\big(C_p({\rm Rad}_{p_0}),\, C_p({\rm Rad}_{p_1})\big)_{\t, p}\subset C_p(L_{p,p}).$$
Now by Lemmas~\ref{kintchine} and \ref{CR},
 $$\big(C_p({\rm Rad}_{p_0}),\, C_p({\rm Rad}_{p_1})\big)_{\t, p}
 =(S_{r_0},\, S_{r_1})_{\t, p}=S_{2,p}\;\text{ with equivalent norms},$$
where $\frac1{r_i}=\frac1{2p}+\frac1{2p_i'}$ for $i=0, 1$. Thus the closed subspace spanned by the Rademacher functions  in $C_p(L_{p,p})$ is isomorphic to $S_{2,p}$. However, that spanned by the same functions in $C_p(L_{p})$ is isomorphic to $S_{2}$. But $S_{2,p}=S_2$ only if $p=2$. Thus the theorem is proved.
 \end{proof}

%%%%%%%%%%%%%%%%%%%%%%%%%%%%%%%%%%%%%%%%%%%%%%%%%%%%%%
%%%%%%%%%%%%%%%%%%%%%%%%%%%%%%%%%%%%%%%%%%%%%%%%%%%%%%

\section{Proof of Theorem~\ref{Lppb}}\label{Proof of Theorem2}

%%%%%%%%%%%%%%%%%%%%%%%%%%%%%%%%%%%%%%%%%%%%%%%%%%%%%%
%%%%%%%%%%%%%%%%%%%%%%%%%%%%%%%%%%%%%%%%%%%%%%%%%%%%%%

We begin this proof by recalling the definition of the space $L_p(\M;\el_q)$ that is introduced in \cite{ju-doob} for $q=1$ and $q=\8$  (see also \cite{ju-xu-erg}), and in \cite{ju-xu} for $1<q<\8$. This definition is inspired by Pisier's description of the norm of $L_p(\M;\el_q)$ in the hyperfinite case.

A sequence $(x_i)$ in $L_p(\M)$ belongs to $L_p(\M;\ell_\8)$ iff $(x_i)$ admits a factorization $x_i=ay_ib$ with $a, b\in L_{2p}(\M)$ and $(y_i)\in\ell_\8(L_\8(\M))$. The norm of $(x_i)$ is then defined as
 \begin{equation}\label{norm of Lplinfty}
  \|(x_i)\|_{L_p(\M;\ell_\8)}
 =\inf_{x_i=ay_ib}\, \|a\|_{L_{2p}(\M)}\,
 \|(y_i)\|_{\ell_\8(L_\8(\M))}\, \|b\|_{L_{2p}(\M)}\,.
 \end{equation}
On the other hand,  $L_p(\M;\ell_1)$ is defined as the space of all sequences $(x_i)\subset L_p(\M)$ for which there exist $a_{ij}, b_{ij}\in L_{2p}(\M)$ such that
 \[x_i=\sum_ja_{ij}^*b_{ij}\,.\]
$L_p(\M;\ell_1)$ is equipped with the norm
 \[ \|(x_i)\|_{L_p(\M;\ell_1)}
 =\inf_{x_i=\sum_{j} a_{ij}^*b_{ij}}\,
 \big\|\sum_{i,j} a_{ij}^*a_{ij}\big\|_{L_{p}(\M)}^{\frac12}\,
 \big\|\sum_{i,j} b_{ij}^*b_{ij}\big\|_{L_{p}(\M)}^{\frac12} \,.\]
Now for $1<q<\8$ we define $L_p(\M;\ell_q)$ as a
complex interpolation space between $L_p(\M;\ell_\8)$ and
$L_p(\M;\ell_1)$:
 \[ L_p(\M;\ell_q)=
 \big(L_p(\M;\ell_\8),\; L_p(\M;\ell_1)\big)_{\frac1q} \,.\]

 The following description of the norm of $L_p(\M;\ell_q)$ is proved in \cite{pis-NCLp} for hyperfinite $\M$ and in \cite{ju-xu} for a general $\M$.

 \begin{lem}\label{explicit norm of Lplq}
 Let $1\le p, q\le\8$.
 \begin{enumerate}[\rm(i)]
 \item If $p\le q$,
 \[\|(x_i)\|_{L_p(\M;\ell_q)}=\inf_{x_i=a y_ib}\,
 \|a\|_{L_{2r}(\M)}\,\|(y_i)\|_{\ell_q(L_q(\M))}\,
 \|b\|_{L_{2r}(\M)}\]
for any $(x_i)\in L_p(\M;\ell_q)$, where $\frac1r=\frac1p-\frac1q$.
 \item If $p\ge q$,
  \[\|(x_i)\|_{L_p(\M;\ell_q)}
  =\sup_{\|\a\|_{L_{2s}(\M)}\le1,\;\|\beta\|_{L_{2s}(\M)}\le1 }\,
 \|(\a x_i\beta)\|_{\ell_q(L_q(\M))}\]
for any $(x_i)\in L_p(\M;\ell_q)$, where $\frac1s=\frac1q-\frac1p$.
 \end{enumerate}
\end{lem}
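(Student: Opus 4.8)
The plan is to establish part~(i) first — its upper bound by multilinear complex interpolation, its lower bound by an analytic (outer-function) factorization — and then to deduce part~(ii) by duality. Throughout I identify $\ell_q(L_q(\M))$ with $L_q(\N)$, $\N=\M\,\bar\otimes\,\ell_\infty$ carrying the obvious semifinite trace, so that a factorization $x_i=ay_ib$ with $a,b\in L_{2r}(\M)$ is literally an $L_q(\N)$-factorization of $(x_i)$ through $L_{2r}(\M)\subset L_{2r}(\N)$; I also freely use the standard identities $(L_{p_0}(\M),L_{p_1}(\M))_\t=L_{p_\t}(\M)$ for semifinite $\M$, the definition $L_p(\M;\ell_p)=(L_p(\M;\ell_\infty),L_p(\M;\ell_1))_{1/p}$, the elementary isometric Fubini identity $L_p(\M;\ell_p)=\ell_p(L_p(\M))$, and hence, by reiteration, $L_p(\M;\ell_q)=(L_p(\M;\ell_\infty),\ell_p(L_p(\M)))_{p/q}$ for $p\le q$. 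For the upper bound in~(i), consider the trilinear multiplication map $\Phi\colon(a,(y_i),b)\mapsto(ay_ib)$. It is a contraction from $L_{2p}(\M)\times\ell_\infty(L_\infty(\M))\times L_{2p}(\M)$ into $L_p(\M;\ell_\infty)$ by the very definition \eqref{norm of Lplinfty}, and trivially a contraction from $L_\infty(\M)\times\ell_p(L_p(\M))\times L_\infty(\M)$ into $\ell_p(L_p(\M))$. Multilinear complex interpolation (Calder\'on; see \cite{BL}) at $\t=p/q$ then makes $\Phi$ a contraction from $L_{2r}(\M)\times\ell_q(L_q(\M))\times L_{2r}(\M)$ into $L_p(\M;\ell_q)$, using $(L_{2p}(\M),L_\infty(\M))_\t=L_{2r}(\M)$, $(\ell_\infty(L_\infty(\M)),\ell_p(L_p(\M)))_\t=\ell_q(L_q(\M))$ and the reiteration identity for the target; taking the infimum over factorizations of $(x_i)$ gives ``$\le$'' in~(i). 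Since $q=\infty$ is the definition and $q=p$ is the Fubini identity, we may assume $1<p<q<\infty$.

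The lower bound in~(i) is the heart of the matter: one must show that every $(x_i)$ with $\|(x_i)\|_{L_p(\M;\ell_q)}<1$ factors as $x_i=ay_ib$ with $\|a\|_{L_{2r}(\M)}\,\|(y_i)\|_{\ell_q(L_q(\M))}\,\|b\|_{L_{2r}(\M)}<1$. Using $L_p(\M;\ell_q)=(L_p(\M;\ell_\infty),\ell_p(L_p(\M)))_{p/q}$, choose an admissible analytic function $g=(g_i)$ on the strip with $g(p/q)=(x_i)$, $\sup_t\|g(it)\|_{L_p(\M;\ell_\infty)}<1$ and $\sup_t\|g(1+it)\|_{\ell_p(L_p(\M))}<1$. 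The goal is to manufacture analytic functions $a(z),b(z),(y_i(z))$ with $g_i(z)=a(z)y_i(z)b(z)$ such that $a(1+it)=b(1+it)=1$ (so $\|(y_i(1+it))\|_{\ell_p(L_p(\M))}<1$), while on $\{\Re z=0\}$ one has $\|a(it)\|_{L_{2p}(\M)}<1$, $\|b(it)\|_{L_{2p}(\M)}<1$ and $\|(y_i(it))\|_{\ell_\infty(L_\infty(\M))}\le1$. Granting this, $z\mapsto a(z)$ is admissible for the couple $(L_{2p}(\M),L_\infty(\M))$ with boundary norms $\le1$, so $\|a(p/q)\|_{L_{2r}(\M)}\le1$ by the definition of the interpolation norm; likewise $\|b(p/q)\|_{L_{2r}(\M)}\le1$, and $z\mapsto(y_i(z))$, admissible for $(\ell_\infty(L_\infty(\M)),\ell_p(L_p(\M)))$, gives $\|(y_i(p/q))\|_{\ell_q(L_q(\M))}\le1$; evaluating $(x_i)=g(p/q)=a(p/q)y_i(p/q)b(p/q)$ then finishes the argument. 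To build $a,b$: on $\{\Re z=0\}$ the hypothesis supplies, for each $t$, a factorization $g_i(it)=\a_t\,h^t_i\,\b_t$ with $\|\a_t\|_{L_{2p}(\M)},\|\b_t\|_{L_{2p}(\M)}<1$ and $\|(h^t_i)\|_{\ell_\infty(L_\infty(\M))}\le1$, and one chooses $a(z),b(z)$ to be \emph{outer} (spectral-factor) operator-valued functions on the strip with $a(z)a(z)^*$ and $b(z)^*b(z)$ having boundary values $\a_t\a_t^*$, $\b_t^*\b_t$ on $\{\Re z=0\}$ and $=1$ on $\{\Re z=1\}$; then $a(it)^{-1}\a_t$ and $\b_tb(it)^{-1}$ are partial isometries, $\|a(it)\|_{L_{2p}(\M)}=\|\a_t\|_{L_{2p}(\M)}<1$, and $y_i(z):=a(z)^{-1}g_i(z)b(z)^{-1}$ is the desired analytic quotient, since $y_i(it)=(a(it)^{-1}\a_t)\,h^t_i\,(\b_tb(it)^{-1})$ has norm $\le1$. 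I expect this operator-valued outer factorization — together with the measurable choice of $(\a_t,\b_t)$ and the bookkeeping about supports, polar decompositions and invertible approximations — to be the only genuinely hard point: for hyperfinite $\M$ \cite{pis-NCLp} it reduces to the classical matricial outer factorization, while for a general $\M$ it is carried out in \cite{ju-xu}.

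Part~(ii) then follows by duality. Dualizing the interpolation definition of $L_p(\M;\ell_q)$ via the more elementary dualities $L_p(\M;\ell_\infty)^*=L_{p'}(\M;\ell_1)$ and $L_p(\M;\ell_1)^*=L_{p'}(\M;\ell_\infty)$ yields $L_p(\M;\ell_q)^*=L_{p'}(\M;\ell_{q'})$ isometrically under the pairing $\langle(x_i),(z_i)\rangle=\sum_i\tau(x_iz_i)$, and similarly $\ell_q(L_q(\M))^*=\ell_{q'}(L_{q'}(\M))$. Now let $p\ge q$, so $p'\le q'$ and $\tfrac1{p'}-\tfrac1{q'}=\tfrac1q-\tfrac1p=\tfrac1s$. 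If $\|\a\|_{L_{2s}(\M)}\le1$, $\|\b\|_{L_{2s}(\M)}\le1$ and $\|(w_i)\|_{\ell_{q'}(L_{q'}(\M))}\le1$, then the upper bound of~(i) applied to the pair $(p',q')$ gives $\|(\b w_i\a)\|_{L_{p'}(\M;\ell_{q'})}\le1$, whence $|\sum_i\tau(\a x_i\b\,w_i)|=|\langle(x_i),(\b w_i\a)\rangle|\le\|(x_i)\|_{L_p(\M;\ell_q)}$; taking the supremum over $(w_i)$ gives $\|(\a x_i\b)\|_{\ell_q(L_q(\M))}\le\|(x_i)\|_{L_p(\M;\ell_q)}$, i.e., ``$\ge$'' in~(ii). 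Conversely, $\|(x_i)\|_{L_p(\M;\ell_q)}=\sup\{|\langle(x_i),(z_i)\rangle|:\|(z_i)\|_{L_{p'}(\M;\ell_{q'})}\le1\}$, and by the lower bound of~(i) for $(p',q')$ every such $(z_i)$ is a norm-limit of elements $\g w_i\d$ with $\|\g\|_{L_{2s}(\M)}\le1$, $\|\d\|_{L_{2s}(\M)}\le1$, $\|(w_i)\|_{\ell_{q'}(L_{q'}(\M))}\le1$; since $|\langle(x_i),(\g w_i\d)\rangle|=|\langle(\d x_i\g),(w_i)\rangle|\le\|(\d x_i\g)\|_{\ell_q(L_q(\M))}$, we obtain $\|(x_i)\|_{L_p(\M;\ell_q)}\le\sup_{\g,\d}\|(\d x_i\g)\|_{\ell_q(L_q(\M))}$, which is ``$\le$'' in~(ii). (The argument also displays the duality between (i) for $(p,q)$ and (ii) for $(p',q')$, consistent with the analytic construction being needed only once.)
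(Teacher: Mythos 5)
First, a point of comparison: the paper does not prove this lemma at all. It is quoted as a known result, with the hyperfinite case attributed to \cite{pis-NCLp} and the general case to \cite{ju-xu}, so there is no internal argument to measure your proposal against. Judged on its own terms, your outline has the right architecture, and two of its three pieces are essentially complete: the upper bound in (i) by trilinear complex interpolation of $(a,(y_i),b)\mapsto(ay_ib)$ (your index bookkeeping $(L_{2p}(\M),L_\8(\M))_{p/q}=L_{2r}(\M)$, $(\el_\8(L_\8),\el_p(L_p))_{p/q}=\el_q(L_q)$ is correct), and the deduction of (ii) from (i) via the duality $L_p(\M;\el_q)^*=L_{p'}(\M;\el_{q'})$, which is indeed how (ii) is obtained in the literature, modulo some care at the endpoints $q=1$ and $p=\8$ where one must work with w*-duality.

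The genuine gap is the lower bound in (i), which is the entire content of the lemma and which you reduce to an operator-valued outer (Szeg\H{o}-type) factorization on the strip: given boundary data $\a_t\a_t^*\in L_p(\M)$ on $\{\Re z=0\}$ and $1$ on $\{\Re z=1\}$, produce an invertible analytic $a(z)$ with these boundary moduli, depending measurably on $t$. For $\M=\mathbb M_n$ (hence, with work, for hyperfinite $\M$ as in \cite{pis-NCLp}) this is classical, but for a general semifinite $\M$ no such factorization theorem is available off the shelf: you would need lower bounds on $\a_t\a_t^*$ (hence an $\e$-perturbation and a limiting argument), a measurable selection of the boundary factorizations, and a noncommutative Szeg\H{o}/Devinatz theorem for $H^\8$ of $\M$-valued functions --- none of which is addressed, and which is precisely why the general case in \cite{ju-xu} is handled by different means (duality, Hahn--Banach type separation and approximation rather than outer functions). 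A secondary issue: your input $L_p(\M;\el_p)=\el_p(L_p(\M))$ is not ``elementary'' --- it is exactly the case $p=q$ of part (i). Note that the termwise map $L_p(\M;\el_1)\to\el_1(L_p(\M))$ is not even uniformly bounded (test $x_i=e_{ii}$ in $\mathbb M_n$), so the identity cannot be obtained by interpolating trivial inclusions; you should either prove it separately or acknowledge that you are quoting a nontrivial special case of the statement you are proving. As written, your text is an accurate road map to the known proofs rather than a self-contained proof.
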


We will again consider the column subspace $C_p(\el_q)$ of $S_p(\el_q)$ for the proof of Theorem~\ref{Lppb}. As in the previous section, a generic element $x\in C_p(\el_q)$ is viewed as an infinite matrix
 $$x=\sum_{i,j=1}^\8 x_{ij}e_{i, 1}\otimes e_j.$$
Let $D_{p,q}$ denote the diagonal subspace of $C_p(\el_q)$ consisting of all $x$ with $x_{ij}=0$ for $i\neq j$.

\begin{lem}\label{pq complemented}
 Let $1\le p, q\le\8$. Then $D_{p,q}$ is completely $1$-complemented in $C_p(\el_q)$.
 \end{lem}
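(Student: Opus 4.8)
The claim is that the diagonal subspace $D_{p,q}\subset C_p(\ell_q)$ is completely $1$-complemented, i.e.\ there is a completely contractive projection $P:C_p(\ell_q)\to D_{p,q}$. The natural candidate is the ``diagonal truncation'' map $P$ sending $x=\sum_{i,j}x_{ij}e_{i,1}\otimes e_j$ to $\sum_i x_{ii}e_{i,1}\otimes e_i$. The plan is to realize $P$ as an average of conjugations by diagonal unitaries, exactly as in the classical proof that the conditional expectation onto the diagonal of $S_p$ is completely contractive. The subtlety is that the index shift $q$ couples the two matrix coordinates differently — the first index $i$ is the ``column'' Schatten coordinate of $C_p$, while $j$ is the $\ell_q$ coordinate — so one must use \emph{two} independent diagonal averages, one in each variable, and check that both are complete contractions on $C_p(\ell_q)$.

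First I would set up the averaging. For a choice of signs (or, better, to avoid finite-group issues, for $t\in[0,1]$ and the Rademacher-type characters, but signs suffice for the diagonal projection on each matrix index), let $u=\mathrm{diag}(\varepsilon_1,\varepsilon_2,\dots)$ act on $C_p(\ell_q)$ by $x\mapsto (u\otimes 1)\,x$, i.e.\ multiplying row $i$ by $\varepsilon_i$; and let $v=\mathrm{diag}(\eta_1,\eta_2,\dots)$ act by $x\mapsto x\,(1\otimes v)$, i.e.\ multiplying $\ell_q$-coordinate $j$ by $\eta_j$. The key point is that left multiplication by a diagonal contraction is completely contractive on $C_p$ (it is a completely contractive module action, since $C_p$ is a column space over the von Neumann algebra of diagonal operators), hence completely contractive on $C_p(\ell_q)=C_p(S_p)\ldots$ — more directly, one uses Lemma~\ref{explicit norm of Lplq}: in the factorization $x_i=ay_ib$ (case $p\le q$) or in the supremum description (case $p\ge q$), multiplying by diagonal contractions on either side is absorbed into $a,b$ or $\alpha,\beta$ and into the $\ell_q(L_q)$ norm of $(y_i)$, which is plainly invariant under diagonal sign flips. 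So each of the maps $x\mapsto (u\otimes 1)x$ and $x\mapsto x(1\otimes v)$ is a complete isometry of $C_p(\ell_q)$.

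Next I would average. Averaging $x\mapsto (u\otimes 1)^*\,x\,(u\otimes 1)$ — wait, $u$ acts only on the left here since the right index of $C_p$ is the trivial one; the correct statement is that averaging $x\mapsto (u\otimes 1)\,x$ over all sign patterns does \emph{not} converge to anything useful because there is no compensating right action on the column index. Instead, the relevant averages are: (1) average over $\eta$ of $x\mapsto (1\otimes v)^*\,x\,(1\otimes v)$, which kills all off-diagonal $\ell_q$-entries, i.e.\ projects $x_{ij}\mapsto \delta_{ij}x_{ij}$ — this is a complete contraction as an average of complete isometries; this already lands in $D_{p,q}$. So in fact a \emph{single} average suffices, over the $\ell_q$-coordinate only, since conjugating the column-matrix $x=(x_{ij})_{i,j}$ by $1\otimes v$ on both sides replaces $x_{ij}$ by $\eta_i\bar\eta_j$-type factors — here I must be careful which index $v$ touches; $v$ acts on $e_j$, so $(1\otimes v)x(1\otimes v)^* $ has $(i,j)$-entry $\eta_j\bar\eta_j x_{ij}=|\eta_j|^2x_{ij}$, useless; the right move is $x\mapsto (u\otimes v)\,x\,(u\otimes v)$ won't work either since there is no right column index. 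The clean resolution: write $C_p(\ell_q)$ with the $\ell_q$-index as a genuine two-sided matrix coordinate is impossible, so one instead uses that $D_{p,q}$ is the fixed-point space of the family of complete isometries $T_\eta:x\mapsto \sum_{i,j}\eta_i\bar\eta_j x_{ij}e_{i,1}\otimes e_j$, where $T_\eta$ is the composition of left multiplication by $\mathrm{diag}(\eta_i)$ (on the row index $i$) and right ``multiplication'' on the $\ell_q$-index $j$ by $\mathrm{diag}(\bar\eta_j)$ — both of which are complete contractions by the previous paragraph. Then $P=\lim$ of Cesàro/Haar averages of $T_\eta$ over $\eta\in\{\pm1\}^{\mathbb N}$ (with the product measure; finite sub-averages converge pointwise) is a completely contractive idempotent with range exactly $D_{p,q}$, which is what we want.

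\textbf{Main obstacle.} The only real content is justifying that the two elementary operations — left multiplication of a $C_p$-valued (hence $S_p$-valued after tensoring with $R$... ) element by a diagonal matrix, and the analogous diagonal action on the $\ell_q$-fibre — are \emph{completely} contractive, not merely contractive; once that is in hand, the averaging argument is standard and gives a norm-one (hence completely contractive, being an average of complete isometries) projection. For the $\ell_q$-fibre action this follows from the explicit norm formulas in Lemma~\ref{explicit norm of Lplq} together with the corresponding formulas at the matrix-amplification level $S_n(C_p(\ell_q))$; for the row-index action it follows from the module structure of $C_p$ over diagonal operators, i.e.\ from $C_p=(C,R)_{1/p}$ and the fact that $C$ and $R$ are bimodules over $\ell_\infty$ acting diagonally. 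So the proof is a short assembly: (i) identify $D_{p,q}$ as the common fixed-point space of a commuting family of complete isometries $T_\eta$; (ii) invoke complete contractivity of each building block; (iii) average.
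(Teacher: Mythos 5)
Your final assembled argument---realizing $D_{p,q}$ as the fixed-point space of the complete isometries $T_\eta$ given by the sign changes $x_{ij}\mapsto \eta_i\bar\eta_j x_{ij}$ and averaging over $\eta$---is correct and is exactly the paper's proof, which simply invokes the complete $1$-unconditionality of the canonical bases of $C_p$ and $\ell_q$ followed by ``a standard average argument.'' The various false starts in your write-up (single-index averages, conjugation attempts) could be deleted; only the last paragraph's three-step assembly is needed.
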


\begin{proof}
 The proof is very simple. It suffices to note that the canonical bases of $C_p$ and $\el_q$ are completely $1$-unconditional. A standard average argument then yields the assertion.
\end{proof}

We will identify an element $x=(x_i e_{i,1})\in D_{p,q}$ with the sequence $(x_i)$.

\begin{lem}\label{pq}
 Let $1\le p, q\le\8$ and $r_{p, q}$ be determined by $\frac1{r_{p,q}}=\frac1{2p}+\frac1{2q}$. Then $D_{p,q}=\el_{r_{p,q}}$ with equal norms.
 \end{lem}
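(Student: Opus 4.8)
The plan is to reduce the claim to an explicit computation of the norm on the diagonal subspace $D_{p,q}$, using the identification $C_p(\ell_q)$ with a suitable corner of $S_p(\ell_q)$ and the two formulas from Lemma~\ref{explicit norm of Lplq}. First I would recall that, under the identification of $x = (x_i e_{i,1}) \in D_{p,q}$ with the sequence $(x_i) \subset \mathbb{C}$, the $C_p(\ell_q)$-norm of such $x$ agrees with the $S_p(B(\ell_2);\ell_q)$-norm of the sequence whose $i$-th term is the rank-one operator $x_i e_{i,1}$ (i.e. the matrix with $x_i$ in the $(i,1)$ slot and zeros elsewhere), since $C_p(\ell_q)$ is completely $1$-complemented in $S_p(\ell_q)$. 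So the task is to evaluate $\|(x_i e_{i,1})_i\|_{L_p(B(\ell_2);\ell_q)}$ for scalars $x_i \ge 0$ (WLOG, by $1$-unconditionality of the matrix units, as in Lemma~\ref{pq complemented}).

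Next I would split into the two cases of Lemma~\ref{explicit norm of Lplq}. Suppose first $p \le q$, so $\frac1r = \frac1p - \frac1q$. The optimal factorization of $x_i e_{i,1}$ should be $x_i e_{i,1} = a\, y_i\, b$ with $b$ a multiple of $e_{1,1}$ (since all the $x_i e_{i,1}$ share the same ``column'' structure) and $a$ diagonal; concretely take $a = \mathrm{diag}(a_i)$, $y_i = e_{i,1}$, $b = \beta e_{1,1}$, giving the constraint $a_i \beta = x_i$. Then $\|a\|_{L_{2r}} = (\sum_i a_i^{2r})^{1/(2r)}$, $\|(y_i)\|_{\ell_q(L_q)} = 1$ (each $e_{i,1}$ has $L_q(B(\ell_2))$-norm one and they are ``disjoint''), and $\|b\|_{L_{2r}} = \beta$. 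Optimizing $\prod$ over $a_i\beta = x_i$ — equivalently minimizing $\beta \cdot (\sum_i (x_i/\beta)^{2r})^{1/(2r)}$ over $\beta>0$ — is a one-line Hölder/Lagrange computation yielding $(\sum_i x_i^{r_{p,q}})^{1/r_{p,q}}$, where $\frac{1}{r_{p,q}} = \frac1{2r} + \frac12 = \frac1{2p} - \frac1{2q} + \frac12 = \frac1{2p} + \frac1{2q'}$. Hmm — I need to double-check this against the claimed $\frac1{r_{p,q}} = \frac1{2p}+\frac1{2q}$; the discrepancy suggests the ``disjointness'' of the $y_i = e_{i,1}$ is the wrong choice and instead one should use $y_i = e_{1,1}$ for all $i$ with $a$ absorbing the index, or more carefully exploit that $(y_i)\in\ell_q(L_q)$ allows the $y_i$ to be supported on disjoint ``tensor'' copies, changing the bookkeeping. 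I would reconcile this by writing the matrix $x = \sum_i x_i e_{i,1}\otimes e_i \in S_p(\ell_q)$ and factoring it directly, being careful that the middle factor lives in $\ell_q(L_q(B(\ell_2)))$ and the relevant ``$L_q$'' structure is that of $B(\ell_2)\bar\otimes B(\ell_2)$; the exponent $r_{p,q}$ is then read off correctly. For the dual case $p \ge q$, I would run the analogous argument with the supremum formula of Lemma~\ref{explicit norm of Lplq}(ii): take $\alpha = \mathrm{diag}(\alpha_i)$, $\beta = \beta_0 e_{1,1}$, compute $\|(\alpha x_i \beta)\|_{\ell_q(L_q)} = (\sum_i \alpha_i^q x_i^q)^{1/q}\beta_0$, and maximize subject to $(\sum \alpha_i^{2s})^{1/2s}\le 1$, $\beta_0 \le 1$, where $\frac1s = \frac1q - \frac1p$; Hölder again gives $(\sum_i x_i^{r_{p,q}})^{1/r_{p,q}}$ with the same exponent, confirming both cases agree (consistency at $p=q$, where $r_{p,q}=p$, is a useful sanity check).

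The main obstacle — and the step I would be most careful about — is getting the exponent bookkeeping in the factorization exactly right, i.e. verifying that the ``disjoint'' index $i$ contributes an $\ell_q$-sum while the column index contributes via the $L_{2r}$ (or $L_{2s}$) factors, so that the two contributions combine to precisely $\frac1{2p}+\frac1{2q}$. This is where an off-by-a-conjugate error (as flagged above) is easy to make; the clean way to avoid it is to carry out the computation at the level of the explicit matrix $\sum_i x_i\, e_{i,1}\otimes e_i$ inside $S_p(\ell_q)$ rather than through the abstract $C_p(\ell_q)$ notation, and to check the answer against two independent endpoints: $q = \infty$ (where $D_{p,\infty}$ should be $\ell_{2p}$, matching $\frac1{r_{p,\infty}} = \frac1{2p}$) and $q = p$ (where $D_{p,p} = \ell_p$). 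Once the scalar optimization is pinned down, the passage from scalars to the full statement is immediate from Lemma~\ref{pq complemented} and the $1$-unconditionality of the bases.
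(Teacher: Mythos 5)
Your overall route --- computing the $D_{p,q}$-norm directly from the two formulas of Lemma~\ref{explicit norm of Lplq} --- is genuinely different from the paper's, which instead proves the contractive inclusions $\el_{r_{p,\8}}\subset D_{p,\8}$ and $\el_{r_{p,1}}\subset D_{p,1}$ by explicit factorizations at the two endpoints $q=\8$ and $q=1$, obtains the reverse inclusions by duality (via Lemma~\ref{pq complemented} and the fact that $r_{p',q'}$ is the conjugate index of $r_{p,q}$), and then handles $1<q<\8$ by complex interpolation through \eqref{comVCp}; the paper's own remark confirms your direct route is viable. But as written your argument has a real gap in the case $p\le q$: the central optimization is carried out incorrectly, you land on the wrong exponent $\frac1{2p}+\frac1{2q'}$, and you only promise, without executing, the computation that would repair it. The concrete error is the claim that $\|(e_{i,1})\|_{\el_q(L_q)}=1$; disjointness of supports does not collapse the $\el_q$-sum, which for $n$ terms equals $n^{\frac1q}$. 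The correct ansatz is $x_ie_{i,1}=a\,y_i\,b$ with $a={\rm diag}(a_i)$, $y_i=c_ie_{i,1}$, $b=e_{1,1}$ and $a_ic_i=x_i$; the product of norms is then $\big(\sum_i a_i^{2r}\big)^{\frac1{2r}}\big(\sum_i c_i^q\big)^{\frac1q}$ with $\frac1r=\frac1p-\frac1q$, and the H\"older/Lagrange optimization gives exactly $\big(\sum_i x_i^{u}\big)^{\frac1u}$ with $\frac1u=\frac1{2r}+\frac1q=\frac1{2p}+\frac1{2q}$, as required.

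A second gap is that in each case you establish only one of the two inequalities. Exhibiting a factorization only bounds the infimum in Lemma~\ref{explicit norm of Lplq}(i) from above, and choosing particular diagonal $\a$ and rank-one $\b$ only bounds the supremum in (ii) from below; you never show that arbitrary factorizations cannot do better, nor that arbitrary $\a,\b$ in the unit ball of $L_{2s}$ cannot give more. The cleanest repair is exactly the paper's duality step: since $D_{p,q}$ is completely $1$-complemented (Lemma~\ref{pq complemented}) and $S_p(\el_q)^*=S_{p'}(\el_{q'})$ isometrically, one gets $D_{p,q}^*=D_{p',q'}$ with equal norms, and $p\le q$ if and only if $p'\ge q'$; so your upper bound from case (i) for $(p,q)$ and your lower bound from case (ii) for $(p',q')$ combine by duality to yield equality. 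Without some such closing argument the proof establishes only a one-sided inclusion.
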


\begin{proof}
 First consider the cases $q=\8$ and $q=1$. Let $x=(x_i e_{i,1})\in D_{p,\8}\subset S_p(\el_\8)$. Let $a$ be the diagonal matrix with the $x_i$'s as its diagonal entries. Then we have the following factorization:
  $$x_i e_{i,1}=a e_{i,1} e_{11},\quad i\ge1.$$
 Thus by the definition of the norm of $S_p(\el_\8)$, we get
  $$\|x\|_{S_p(\el_\8)}\le \|a\|_{S_{2p}}\, \|(e_{i,1})\|_{\el_\8(B(\el_2))}\,\| e_{11}\|_{S_{2p}}=\|x\|_{\el_{2p}}\,.$$
On the other hand, for $q=1$ we factorize $x$ as
  $$x_i e_{i,1}=[{\rm sgn}(x_i)|x_i|^\a e_{i,1}]\,[|x_i|^{1-\a}e_{1,1}]=a_i b_i,$$
 where $\a=\frac{r_{p,1}}{2p}$. Therefore, by the definition of the norm of $S_p(\el_1)$
  \begin{align*}
  \|x\|_{S_p(\el_1)}
  &\le\big\|\sum_i a_ia_i^*\big\|_{p}^{\frac12}\big\|\sum_i b^*_ib_i\big\|_{p}^{\frac12}\\
  &=\big(\sum_i |x_i|^{r_{p,1}}\big)^{\frac1{2p}}\big(\sum_i |x_i|^{r_{p,1}}\big)^{\frac12}=\|x\|_{\el_{r_{p,1}}}\,.
  \end{align*}
 Thus we have proved that for any $1\le p\le\8$
  $$\el_{r_{p,\8}}\subset D_{p,\8}\quad\text{ and }\quad \el_{r_{p,1}}\subset D_{p,1}\;\text{ contractively}.$$
 Dualizing these inclusions and using Lemma~\ref{pq complemented}, we deduce the assertion for $q=\8$ and $q=1$. The case $1<q<\8$ is then completed by complex interpolation via \eqref{comVCp} with the help of Lemma~\ref{pq complemented} again.
\end{proof}

\begin{rk}
 The previous lemma can be proved directly by Lemma~\ref{explicit norm of Lplq} without passing to complex interpolation.
 \end{rk}

\begin{proof}[Proof of Theorem~\ref{Lppb}]
 If $\M$ is isomorphic, as Banach space, to some commutative $L_\8$, then $\M$ is a finite direct sum of algebras of the form $L_\8(\O,\mu)\otimes \mathbb M_n$, where $\mathbb M_n$ is the $n\times n$ full matrix algebra. Then \eqref{Lpinfinite} goes back to \eqref{realVLp}.

 Conversely, suppose that $\M$ is not isomorphic to a commutative von Neumann algebra. Our first step is to reduce the non validity of \eqref{Lpinfinite} to the special case  where $\M=B(\el_\8)$. To this end, we use the type decomposition of $\M$. If the type I summand of $\M$ is infinite dimensional, then $\M$ contains $\mathbb M_n$ for infinite many $n$'s. On the other hand, if the type II$_\8$ summand of $\M$ exists, then $\M$ contains $\B(\el_2)$. Finally, it is well known that if the type II$_1$ summand of $\M$ exists, then $\M$ contains the hyperfinite II$_1$ factor $\R$ (cf. e.g. \cite{M}); $\R$ is the von Neumann tensor of countable many copies of $(\mathbb M_2,\,{\rm tr})$, where ${\rm tr}$ is the normalized trace on $\mathbb M_2$; so $\M$ again contains $\mathbb M_n$ for infinite many $n$'s. Note that in all the three cases, the $\mathbb M_n$'s contained in $\M$ are images of  trace preserving normal conditional expectations (up to a normalization in the type II$_1$ case).

 In summary, if $\M$ is not isomorphic to a commutative von Neumann algebra, $\M$ contains $\mathbb M_n$ for infinite many $n$'s which are images of  trace preserving normal conditional expectations. This shows that if \eqref{Lpinfinite} held for $\M$, then it would do so for $\M=\mathbb M_n$ for infinite many $n$'s; consequently, by approximation, it would further hold for $\M=B(\el_2)$ too. This finishes the announced reduction.

\medskip

 It remains to show that  \eqref{Lpinfinite} fails for $\M=B(\el_2)$. Namely, we must show
  $$\big(S_\8(\el_q),\, S_1(\el_q)\big)_{\frac1p, p}\not=S_p(\el_q)\,.$$
 This is an easy consequence of the previous two lemmas. By Lemma~\ref{pq complemented},
 $$\big(D_{\8,q},\, D_{1,q}\big)_{\frac1p, p}\subset\big(S_\8(\el_q),\, S_1(\el_q)\big)_{\frac1p, p},\quad\text{an isometric embedding}.$$
On the other hand, by Lemma~\ref{pq},
 $$\big(D_{\8,q},\, D_{1,q}\big)_{\frac1p, p}=\big(\el_{r_{\8,q}},\, \el_{r_{\8,q}}\big)_{\frac1p, p}=\el_{r_{p,q}, p}\quad\text{ with equivalent norms},$$
where $\el_{r, p}$ denotes the Lorentz sequence space. On the other hand, by Lemma~\ref{pq},  the corresponding subspace of $S_p(\el_q)$ is equal to $\el_{r_{p,q}}$. However, $\el_{r_{p,q}, p}=\el_{r_{p,q}}$ if and only if $r_{p,q}=p$, i.e., $q=p$. The theorem is thus proved.
 \end{proof}

%%%%%%%%%%%%%%%%%%%%%%%%%%%%%%%%%%%%%%%%%%%%%%%%%%%%%%
%%%%%%%%%%%%%%%%%%%%%%%%%%%%%%%%%%%%%%%%%%%%%%%%%%%%%%

\section{Proof of Theorem~\ref{decreasing}}

%%%%%%%%%%%%%%%%%%%%%%%%%%%%%%%%%%%%%%%%%%%%%%%%%%%%%%
%%%%%%%%%%%%%%%%%%%%%%%%%%%%%%%%%%%%%%%%%%%%%%%%%%%%%%

In the sequel, $\|\,\|_p$ will denote the norm of $L_p(\M)$. Fix a finite sequence $(x_i)\subset L_p^+(\M)$. We claim that the function $q\mapsto
\big\|\big(\sum_ix_i^q\big)^{\frac1q}\big\|_p^q$ is log-convex.
Namely, for any $q_0,q_1\in(0,\,\8)$ and any $\a\in(0, \,1)$
 \beq\label{logconv}
 \big\|\big(\sum_ix_i^q\big)^{\frac1q}\big\|_p^q
 \le \big\|\big(\sum_ix_i^{q_0}\big)^{\frac1{q_0}}\big\|_p^{(1-\a)q_0}\,
 \big\|\big(\sum_ix_i^{q_1}\big)^{\frac1{q_1}}\big\|_p^{(1-\a)q_1}\,,
 \eeq
where $q=(1-\a)q_0 +\a q_1$. It suffices to show this inequality for $\a=\frac12$. Then it immediately follows from the H\"older inequality for column spaces:
 $$
 \big\|\sum_i x_i^q\big\|_{\frac{p}q}
 =\big\|\sum_i x_i^{\frac{q_0}2}\,x_i^{\frac{q_1}2}\big\|_{\frac{p}q}
 \le
 \big\|\big(\sum_ix_i^{q_0}\big)^{\frac1{q_0}}\big\|_p^{\frac{q_0}2}\,
 \big\|\big(\sum_ix_i^{q_1}\big)^{\frac1{q_1}}\big\|_p^{\frac{q_1}2}\,.
 $$

Now let us show  \eqref{decreasing in q}. Replacing $x_i$ by
$x_i^r$ and dividing all indices by $r$, we are reduced to the
case where $r=1<q$ and $p\ge1$. Thus it suffices to show
 \beq\label{decreasing in qbis}
 \big\|\big(\sum_i x_i^q\big)^{\frac1q}\big\|_p
 \le\big\|\sum_i x_i\big\|_p\,.
 \eeq
Set $x=\sum_ix_i$. We first consider the case where $q=2$. If
$p\le2$, then
 \begin{align*}
 \big\|\big(\sum_i x_i^2\big)^{\frac12}\big\|_p^p
 &=\tau\big[\big(\sum_i x_i^2\big)^{\frac{p}2}\big]\le \sum_i \tau(x_i^p)\\
 &\le\sum_i \tau(x_i^{\frac12}x^{p-1}x_i^{\frac12})
 =\sum_i \tau(x^{p-1}x_i)=\|x\|_p^p\,.
  \end{align*}
Note that the same argument yields \eqref{decreasing in qbis} in
the case where $p\le \min(q, 2)$. Assume $p>2$. Let $(\e_n)$ be a
Rademacher sequence, and let $\mathbb E$ denote the corresponding
expectation. Then by the triangle inequality in $L_{\frac{p}2}(\M)$,
 $$
 \big\|\big(\sum_i x_i^2\big)^{\frac12}\big\|_p
 \le\big[\mathbb E\big\|\sum_i \e_ix_i\big\|_p^2\big]^{\frac12}
 \le\|x\|_p\,,
$$
where we have used the fact that $-x\le \sum_i\e_ix_i\le x$. Thus
inequality \eqref{decreasing in qbis} is proved for $q=2$ and any
$p\ge1$.

Using \eqref{logconv} and the just proved cases, we deduce
\eqref{decreasing in qbis} for any $1<q\le2$ and any $p\ge1$.

Next, we show \eqref{decreasing in qbis} for $q=p$. If $p\le 2$,
this was proved previously. The case $p>2$ easily follows by an
iteration argument. Indeed, if $2<p\le 4$, by the two cases
already proved
  \begin{align*}
 \big\|\big(\sum_i x_i^p\big)^{\frac1p}\big\|_p
 &=\big\|\big(\sum_i x_i^p\big)^{\frac2p}\big\|_{\frac{p}2}^{\frac12}
 \le\big\|\sum_i x_i^2\big\|_{\frac{p}2}^{\frac12}\\
 &=\big\|\big(\sum_n x_n^2\big)^{\frac12}\big\|_{p}\le\|x\|_p.
  \end{align*}
Repeating this argument, we obtain \eqref{decreasing in qbis} for
the case $q=p$.

We then deduce \eqref{decreasing in qbis} for $1\le p\le q$ as follows
 $$
 \big\|\big(\sum_i x_i^q\big)^{\frac1q}\big\|_p
 \le\big\|\big(\sum_i x_i^p\big)^{\frac1p}\big\|_p
 \le\|x\|_p\,.$$

Thus it remains to consider the case where
$p\ge q\ge 2$. This is treated by an iteration argument as above. Indeed, if $q\le 4$, then
  $$
 \big\|\big(\sum_i x_i^q\big)^{\frac1q}\big\|_p
 =\big\|\big(\sum_i (x_i^2)^{\frac{q}2}\big)^{\frac2q}\big\|_{\frac{p}2}^{\frac12}
 \le\big\|\big(\sum_i x_i^2\big)^{\frac12}\big\|_p\le \|x\|_p\,.
 $$
Thus the proof of (\ref{decreasing in qbis}), so that of   \eqref{decreasing in q}, is complete.

\medskip

Now we turn to the proof of part (ii) of the theorem. As in the proof of Theorem~\ref{Lppb}, it suffices to consider the case where
$\M=B(\el_2)$. Suppose that \eqref{negative} holds with a constant $C$ and some indices $p, q, r$ such
that $0<p<r<q<\8$. Then by dividing all indices by $r$, we are
again reduced to the case where $p<1=r<q$. Thus
 \beq\label{nondecresing}
 \big\|\big(\sum_ix_i^q\big)^{\frac1q}\big\|_p
 \le C\,\big\|\sum_ix_i\big\|_p
 \eeq
for all finite sequences $(x_i)\subset L_p^+(\M)$.
We claim that $C$ must be equal to $1$. Indeed, given a positive
integer $k$, applying (\ref{nondecresing}) to the family
$(x_{i_1}\ot\cdots\ot x_{i_k})$ we get
  $$
 \big\|\big(\sum_{i_1, ..., i_k}x_{i_1}^q\ot\cdots\ot
 x_{i_k}^q\big)^{\frac1q}\big\|_p
 \le C\,\big\|\sum_{i_1, ..., i_k}x_{i_1}\ot\cdots\ot x_{i_k}\big\|_p\,.
 $$
However,
 $$
 \big\|\sum_{i_1, ..., i_k}x_{i_1}\ot\cdots\ot x_{i_k}\big\|_p
 =\big\|\big(\sum_ix_i\big)^{\ot k}\big\|_p
 =\big\|\sum_ix_i\big\|_p^k\,.
 $$
Similarly,
 $$\big\|\big(\sum_{i_1, ..., i_k}x_{i_1}^q\ot\cdots\ot
 x_{i_k}^q\big)^{\frac1q}\big\|_p
 =\big\|\big(\sum_ix_i^q\big)^{\frac1q}\big\|_p^k\,.$$
It then follows that
 $$
 \big\|\big(\sum_ix_i^q\big)^{\frac1q}\big\|_p
 \le C^{\frac1k}\,\big\|\sum_ix_i\big\|_p\,;
 $$
whence the claim by letting $k\to\8$.

Now it is easy to construct counterexamples to
(\ref{nondecresing}) with $C=1$. Consider $\M=\mathbb{M}_2$ and  the following matrices
 $$x=\begin{pmatrix}
  1 & 1 \\
  1 & 1 \\
\end{pmatrix}
 \quad\text{ and }\quad
 y=\begin{pmatrix}
  0 & 0 \\
  0 & t\\
\end{pmatrix}$$
with $t>0$ very small. Then
 $$x+y=\begin{pmatrix}
  1 & 1 \\
  1 & 1+t \\
\end{pmatrix} \quad\text {and }\quad
 x^q+y^q=2^{q-1}\begin{pmatrix}
  1 & 1 \\
  1 & 1+2^{1-q}\,t^q\\
\end{pmatrix}.$$
The two eigenvalues of $x+y$ are
 $$\frac{2+t\pm\sqrt{4+t^2}}2\,.$$
Thus (recalling that $p<1$)
 \begin{align*}
 \big\|x+y\big\|_p^p
 &=\frac{(2+t +\sqrt{4+t^2}\,)^p}{2^p}\, +\,
 \frac{(2+t -\sqrt{4+t^2}\,)^p}{2^p}\\
 &=2^p + 2^{-p}\,t^p +{\rm o}(t^p)\quad\mbox{as}\quad t\to0.
 \end{align*}
Similarly,
 $$
 \big\|(x^q+y^q)^{\frac1q}\big\|_p^p=2^p + 2^{-\frac{p}q}\,t^p +{\rm o}(t^p).
 $$
Hence, by (\ref{nondecresing}) with $C=1$, we get
 $$2^p + 2^{-\frac{p}q}\,t^p+{\rm o}(t^p)
 \le 2^p + 2^{-p}\,t^p +{\rm o}(t^p).$$
It then follows that $2^q\le 2$, which is a contradiction since $q>1$. Thus Theorem~\ref{decreasing} is completely proved.

\bigskip \n{\bf Acknowledgements.}  The second named author is partially supported by the French ANR project (No. ANR-19-CE40-0002) and the Natural Science Foundation of China (No.12031004).

\bigskip

%%%%%%%%%%%%%%%%%%%%%%%%%%%%%%%%%%%%%%%%%%%%%%%%%%%%%%
%%%%%%%%%%%%%%%%%%%%%%%%%%%%%%%%%%%%%%%%%%%%%%%%%%%%%%


\begin{thebibliography}{10}

%%%%%%%%%%%%%%%%%%%%%%%%%%%%%%%%%%%%%%%%%%%%%%%%%%%%%%
%%%%%%%%%%%%%%%%%%%%%%%%%%%%%%%%%%%%%%%%%%%%%%%%%%%%%%

\bibitem{BL}
 J.~Bergh,  and J.~L{\"o}fstr{\"o}m. {\em Interpolation spaces.}  Springer-Verlag, Berlin, 1976.

\bibitem{ER}
 Ed.~Effros, and  Z-J.~Ruan. {\em Operator spaces}, vol. 23 of {\em London Mathematical Society Monographs. New Series}. The Clarendon Press Oxford University Press, New York, 2000.

 \bibitem{ju-doob}
M.~Junge. Doob's inequality for non-commutative martingales. {\em J. Reine Angew. Math.}, 549 (2002),149--190.

\bibitem{ju-xu-erg}
 M.~Junge, and Q.~Xu. Noncommutative maximal ergodic inequalities. \textit{J. Amer. Math. Soc.} 20 (2007), 385-439.

\bibitem{ju-xu}
  M.~Junge, and Q.~Xu. Noncommutative Burkholder/Rosenthal inequalities II: Applications. {\em Israel J. Math.} 167 (2008), 227--282.

\bibitem{LPP}
F.~Lust-Piquard, and G.~Pisier. Noncommutative {K}hintchine and {P}aley inequalities. {\em Ark. Mat.}, 29 (1991), 241--260.

 \bibitem{M}
 J.L.~Marcolino Nhany. La stabilit\'e des espaces $L_p$ non-commutatifs. {\em Math. Scand.} 81 (1997), 212-218.

\bibitem{pis-OH}
 G.~Pisier. The operator {H}ilbert space {${\rm OH}$},
complex interpolation and tensor norms. {\em Mem. Amer. Math. Soc.}, 122(585), 1996.

\bibitem{pis-NCLp}
  G.~Pisier. Non-commutative vector valued {$L\sb p$}-spaces and completely
  {$p$}-summing maps. {\em Ast\'erisque}, vol. 247, 1998.

\bibitem{pis-intro}
 G.~Pisier.  {\em Introduction to operator space theory}, vol. 294 of {\em  London Mathematical Society Lecture Note Series}.  Cambridge University Press, Cambridge, 2003.

\bibitem{px}
G.~Pisier, and Q.~Xu. Non-commutative {$L\sp p$}-spaces. In {\em Handbook of the geometry of Banach spaces, Vol.\ 2}, pages
  1459--1517. North-Holland, Amsterdam, 2003.

\bibitem{Takesaki}
M.~Takesaki. {\em Theory of operator algebras } I. Springer-Verlag, 1979.
\bibitem{xu-op}
 Q.~Xu. Interpolation of operator spaces. {\em J. Funct. Anal.}, 139 (1996), 500--539.

 \bibitem{xu-emb}
Q.~Xu. Embedding of {$C\sb q$} and {$R\sb q$} into noncommutative {$L\sb
  p$}-spaces, {$1\leq p<q\leq2$}. {\em Math. Ann.}  335 (2006),109--131.



\end{thebibliography}
\end{document}